\newtheorem{theorem}{Theorem}[section]
\newtheorem{definition}[theorem]{Definition}
\newtheorem{example}[theorem]{Example}
\newtheorem{lemma}[theorem]{Lemma}
\newtheorem{proposition}[theorem]{Proposition}
\newtheorem{remark}[theorem]{Remark}
\newcommand{\xdownarrow}[1]{%
  {\left\downarrow\vbox to #1{}\right.\kern-\nulldelimiterspace}
}
\title{\textbf{Whitney equisingularity in families of generically reduced curves}}
\author{O.N. Silva\footnote{O.N. Silva: Universidad Nacional Autónoma de México, Instituto de Matemáticas, Unidad Cuernavaca, Av. Universidad s/n, Lomas de Chamilpa, 62210, Cuernavaca, Morelos, Mexico. e-mail: otoniel@im.unam.mx} $\ \ $ and $\ \ $ J. Snoussi\footnote{J. Snoussi: Universidad Nacional Autónoma de México, Instituto de Matemáticas, Unidad Cuernavaca, Av. Universidad s/n, Lomas de Chamilpa, 62210, Cuernavaca, Morelos, Mexico, e-mail: jsnoussi@im.unam.mx}
}
\date{}
\begin{document}

\maketitle

\begin{abstract}

In this work we study equisingularity in a one-parameter flat family of generically reduced curves. We consider some equisingular criteria as topological triviality, Whitney equisingularity and strong simultaneous resolution. In this context, we prove that Whitney equisingularity is equivalent to strong simultaneous resolution and it is also equivalent to the constancy of the Milnor number and the multiplicity of the fibers. These results are extensions to the case of flat deformations of generically reduced curves, of known results on reduced curves. When the family $(X,0)$ is topologically trivial, we also characterize Whitney equisingularity through Cohen-Macaulay property of a certain local ring associated to the parameter space of the family.

\end{abstract}

\section{Introduction}

$ \ \ \ \ \ $ Consider a germ of reduced equidimensional complex surface $(X,0)$ together with an analytic flat map $p$ from $(X,0)$ to $(\mathbb{C},0)$ and consider a representative $p:X\rightarrow T$. The surface $X$ can be viewed as a one parameter flat deformation of the curve $X_0:=p^{-1}(0)$. When $(X,0)$ is not Cohen-Macaulay the germ of curve $(X_0,0)$ has an embedded component at the origin. If $(X_0,0)$ is reduced at all its points $x\neq 0$, then we say that $(X_0,0)$ is a germ of generically reduced curve. We can see that this situation arises in many and natural examples (see \cite[Ex. $55$]{bobadilla2}, \cite[Ex. $2.4$]{jawad}, \cite[Prop. $3.51$]{ref22}, \cite[Ex. $4.6$]{cong} and \cite[Ex. $9.11$]{greuel3}).

Brücker and Greuel studied these deformations in \cite{greuel} and defined the $\delta$ invariant and a Milnor number for a generically reduced curve. In particular, they showed that normalization in family is equivalent to the constancy of $\delta$ of the fibers along the parameter space \cite[Kor. $3.2.1$]{greuel}.

In \cite{cong}, Công-Trình Lê studied topological triviality of such families comparing it to the constancy of Milnor number; the full statement was proved by Greuel in \cite[Th. $9.3$]{greuel3}. 

In \cite[Th. $3.1$]{jawad} Whitney equisingularity of such a family was proved to be equivalent to Zariski's discriminant criterion.

All these results are extensions to the case of flat deformations of generically reduced curves, of known results on reduced curves. See for example, \cite{briancon} and \cite{buch} to know more about the reduced case.

The aim of this work is to study Whitney equisingular deformations of a generically reduced curve. Our first main result is the characterization of Whitney equisingularity by the constancy of the Milnor number and the multiplicity of the fibers along the parameter space (Theorem \ref{whitney1}). The second main result is the equivalence between Whitney equisingularity and strong simultaneous resolution (Theorem \ref{whitney3}).

We also prove that for a topologically trivial deformation of a generically reduced curve, Whitney equisingularity can be characterized by the Cohen Macaulay property of a certain local ring associated to the parameter space (Lemma \ref{whitney2}).

Throughout the paper, $(x_1,\cdots,x_N)$ denotes a local coordinate system for $\mathbb{C}^N$. When we consider a family of curves $(X,0)$ in $(\mathbb{C}^{N+1},0)$ we will denote the last coordinate of $\mathbb{C}^{N+1}$ by $t$. The rings $\mathcal{O}_N\simeq \mathbb{C}\lbrace x_1,\cdots,x_N \rbrace$ and $\mathcal{O}_{N+1}\simeq \mathbb{C}\lbrace x_1,\cdots,x_N,t \rbrace$ denote the local rings of holomorphic functions at the origin, respectively on $\mathbb{C}^N$ and $\mathbb{C}^N \times \mathbb{C} \simeq \mathbb{C}^{N+1}$. 

\section{Families of generically reduced curves}\label{sec2}

$ \ \ \ \ $ We describe here the context in which we will be working all along this paper.

\begin{definition}\label{defsur} (a) Let $(C,0)$ be a germ of curve in $(\mathbb{C}^{N},0)$. We say that $(C,0)$ is generically reduced at $0$ if the only possibly non reduced point of the curve near the origin is the origin itself, that is, a curve with an isolated singularity at the origin.\\ 

\noindent (b) Let $(X,0) \subset (\mathbb{C}^{N+1},0) $ be a germ of a reduced and pure dimensional complex surface. Consider an analytic map $p:(X,0)\rightarrow (\mathbb{C},0)$. We say that the surface $(X,0)$ is a one-parameter flat deformation of the germ of reduced curve $(X_0,0):=(p^{-1}(0),0)$ if $p$ is a flat map. When $(X_0,0)$ is generically reduced, we say $p:(X,0)\rightarrow (\mathbb{C},0)$, or simply $(X,0)$, is a family of generically reduced curves.\\

\noindent (c) Given a representative $p:X\rightarrow T$ of a family of generically reduced curves $(X,0)$, we will denote the fibers of $p$ by $X_t:=p^{-1}(t)$.

\end{definition}

We remark that if $(X,0)$ is not Cohen-Macaulay, then the special fiber $(X_0,0)$ have an embedded component (see \rm\cite[Cor. 6.5.5]{jong}, and also \cite[Th. 17.3]{matsumura}).

Following \cite[p. $248$]{buch} and \cite[Appendix]{greuel3}, we recall briefly the notion of a good representative for a family of generically reduced curves $p:(X,0)\rightarrow (\mathbb{C},0)$.

\begin{definition}\label{goodrepresentative}

Let $(X,0) \subset (\mathbb{C}^{N+1},0) $ be a germ of a reduced and pure dimensional complex surface and consider a flat map $p:(X,0)\rightarrow (\mathbb{C},0)$. \textit{Let $B(0,\epsilon) \subset  \mathbb{C}^N$ (resp. $D(0,\eta) \subset \mathbb{C}$) be an open ball around $0$ of radius $\epsilon$ (resp. an open disc around $0$ of radius $\eta$) and $X_0\subset B(0,\epsilon)$ a closed subspace of $B(0,\epsilon)$ representing $(X_0,0)$.}

\textit{If $T \subset D(0,\eta)$ is a closed ball around $0$ of radius $\eta(\epsilon)$ and $X\subset B(0,\epsilon) \times T$ is a closed subspace representing $(X,0)$, and if $\epsilon$ is sufficiently small and $\eta(\epsilon)$ is sufficiently small with respect to $\epsilon$, then $p:X\rightarrow T$ is called a good representative of $p:(X,0)\rightarrow (\mathbb{C},0)$ (see Figure \rm\ref{fig1}).}

\begin{figure}[h]
\centering
\includegraphics[scale=0.22]{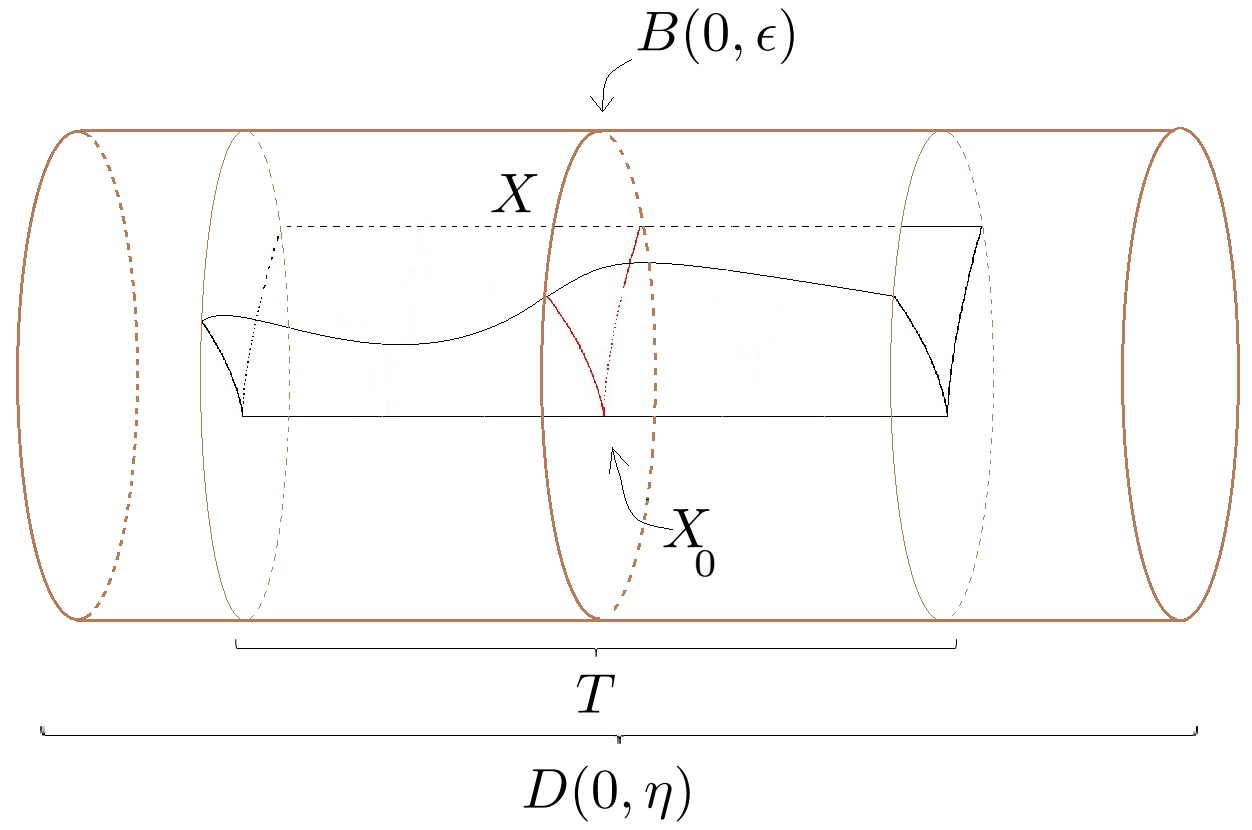}  
\caption{Illustration of a good representative.}\label{fig1}
\end{figure}

\textit{By identifying $B \times \lbrace t \rbrace$ with $B$, we usually consider the fibers $X_t$ of $p$ as subsets of $B$. Talking about good representatives, we allow $\epsilon$ and $\eta(\epsilon)$ to shrink during an argument, if necessary.} \textit{The set $B$ is called a Milnor ball if $\epsilon$ is sufficiently small, and the generic fiber $X_t:=p^{-1}(t)$ ($t \neq 0$) is called Milnor fiber of $p$.}
\end{definition}

We remark that in our setting, given a family of generically reduced curves $p:(X,0)\rightarrow (\mathbb{C},0)$, a good representative $p:X\rightarrow T$ always exists.

The surface $(X,0)$ of the following example appears in \rm\cite[Example $55$]{bobadilla2} as the singular set of a family of surfaces.

\begin{example}\label{exemplo2.1} Let $(X,0) \subset (\mathbb{C}^4,0) $ be the germ of surface parametrized by the map 

$$\begin{array}{rcl}
n:(\mathbb{C}^2,0) & \rightarrow & (X,0) \subset (\mathbb{C}^4,0)\\
(u,t) & \mapsto & (u^3, u^4, tu, t)\\
\end{array}
$$

\noindent it is a germ of reduced and pure dimensional complex surface with an isolated singularity at $0$, defined by the ideal
 
$$I_{(X,0)}= \langle xz-ty, \ y^3-x^4, \ y^2z-tx^3, \ yz^2-t^2x^2, \ z^3-t^3x \rangle \mathbb{C}\lbrace x,y,z,t \rbrace .$$

Consider the linear projection in the last factor $p:(\mathbb{C}^3 \times \mathbb{C},0)\rightarrow (\mathbb{C},0)$. Note that the restriction of $p$ to $(X,0)$ is a flat map, hence $p$ makes the surface into a one parameter deformation of the curve $(X_0,0)$ defined by the ideal: 

$$I_{(X_0,0)}= \langle xz,y^3-x^4,y^2z,yz^2,z^3 \rangle \mathbb{C}\lbrace x,y,z \rbrace .$$

One can see that the curve $(X_0,0)$ has an embedded component at the origin and the surface $(X,0)$ is not Cohen-Macaulay (see also Example \rm\ref{example2.2}\textit{)}. 
\end{example}

\section{The invariants}

$ \ \ \ \ \ $ Following Brücker and Greuel \cite[p. 96]{greuel}, we recall the definitions of the $\delta$-invariant and the Milnor number for 
a generically reduced curve.

\begin{definition}
Let $(C,0) \subset (\mathbb{C}^{N},0)$ be a germ of a generically reduced curve. Denote by $(|C|,0)$ the corresponding curve with reduced structure, and let $n:(\hat{C},\overline{0})\rightarrow (|C|,0)$ be the normalization of $(|C|,0)$. The following numbers:

\begin{center}
$\delta(|C|,0):=dim_{\mathbb{C}} \left(\dfrac{n_{\ast}(\mathcal{O}_{(\hat{C},\overline{0})})}{\mathcal{O}_{(|C|,0)}} \right)$, $\ \ \ $ $\epsilon(C,0):=dim_{\mathbb{C}} H^0(\mathcal{O}_{(C,0)})$ $\ \ \ \ $ and $ \ \ \ \ $ $\delta(C,0):=\delta(|C|,0)-\epsilon(C,0)$\\
\end{center}

\noindent are respectively the delta-invariant of $(|C|,0)$, the epsilon-invariant and the delta-invariant of $(C,0)$, where $H^0(R)$ denotes the $0$-local cohomology of the local ring $R$. Further,

\begin{center}
$m(C,0)$, $ \ \ \ \ $ $\mu(|C|,0)=2\delta(|C|,0)-r(C,0)+1 \ \ \ \ $ and $\ \ \ \ \mu(C,0)=\mu(|C|,0)-2\cdot \epsilon(C,0)$
\end{center}

\noindent are respectively the Hilbert-Samuel multiplicity of the maximal ideal $M({\mathcal{O}_{(C,0)}})$ of the local ring 
$\mathcal{O}_{(C,0)}$ of $C$ at $0$ and the Milnor number of $(|C|,0)$ and $(C,0)$; where $r(C,0)$ denotes the number of branches of $(C,0)$.

\end{definition}

\begin{remark}\label{remark3.2}
Let $(C,0)$ be a germ of generically reduced curve with local ring $\mathcal{O}_{(C,0)}\simeq\dfrac{\mathcal{O}_{N}}{I_{(C,0)}}$. The $\epsilon$ invariant can be also calculated as 

\begin{center}
 $\epsilon(C,0)=dim_{\mathbb{C}} \ (\sqrt{0}) \ \mathcal{O}_{(C,0)}=dim_{\mathbb{C}} \left( \dfrac{\sqrt{{I_{(C,0)}}}}{{I_{(C,0)}}} \right) \mathcal{O}_{N}$,
 \end{center} 

\noindent that is, the dimension of the nilradical of $\mathcal{O}_{(C,0)}$ as a $\mathbb{C}$-vector space (which is finite if and only if $(C,0)$ has at most an isolated singularity at the origin; see for instance \rm\cite[\textit{page 30}]{greuel3}\textit{). Also in} \rm\cite[\textit{Corollary 5.7}]{greuel3}\textit { Greuel showed that if $I=P_1\cap \cdots \cap P_r \cap Q$ is an irredundant primary decomposition of $I$, where $P_j$ are prime ideals and $Q$ is a (not unique) $m$-primary ideal, then}

\begin{center}
$\epsilon(C,0)=dim_{\mathbb{C}} \mathcal{O}_N/Q \ - \ dim_{\mathbb{C}} \mathcal{O}_N/((P_1\cap \cdots \cap P_r)+Q)$.
\end{center}

\end{remark}

\begin{example}\label{example2.2} 
Let us consider the Example \rm \ref{exemplo2.1}. \textit{A primary decomposition of the ideal $I_{(X_0,0)}$ in $\mathcal{O}_3 \simeq \mathbb{C}\lbrace x,y,z \rbrace $ is given by}

\begin{center}
$I_{(X_0,0)}= \langle  xz,y^3-x^4,y^2z,yz^2,z^3 \rangle \mathcal{O}_3 = \langle z,y^3-x^4 \rangle \mathcal{O}_3 \cap  \langle x^4,xz,y^2,yz^2,z^3 \rangle \mathcal{O}_3 .$
\end{center}

\noindent \textit{The local ring of the corresponding reduced curve is $\mathcal{O}_{(|X_0|,0)} \simeq \dfrac{{\mathcal{O}_3}}{ \langle z, y^3 - x^4 \rangle }$, so $\delta(|X_{0}|,0)=3$ and $\mu(|X_0|,0)=6$. We have also that} 

\begin{center}
$ dim_{\mathbb{C}}\dfrac{\mathbb{C}\lbrace x,y,z \rbrace}{\langle x^4,xz,y^2,yz^2,z^3 \rangle}=11$ $ \ \ \ $ and $ \ \ \ $  $dim_{\mathbb{C}}\dfrac{\mathbb{C}\lbrace x,y,z \rbrace}{\langle x^4,xz,y^2,yz^2,z^3, z, y^3-x^4 \rangle}=8$,
\end{center}

\noindent \textit{thus by Remark} \rm\ref{remark3.2} \textit{we have that $\epsilon(X_0,0)=3$. Therefore, $\delta(X_{0},0)=0$ and $\mu(X_{0},0)=0$.}
\end{example}

\section{Topological triviality and Whitney equisingularity}\label{sectiontop}

$ \ \ \ \ $ In this section we will deal with topological triviality and Whitney equisingularity, and we will state and prove our first main result.

\begin{definition}\label{deftoptrivial} Let $p:(X,0) \subset (\mathbb{C}^{N+1},0) \rightarrow (\mathbb{C},0)$ be a family of generically reduced curves, the surface $(X,0)$ being pure dimensional. Consider a good representative $p:X\rightarrow T$ with a section $\sigma:T \rightarrow X$, such that both $\sigma(T)$ and $X_t \setminus \sigma(t)$ are smooth for $t \in T$.\\ 

\noindent \textit{(a) We say that $p: X\rightarrow T$ is topologically trivial (or for simplicity, $X$ is topologically trivial) if there is a homeomorphism $h: X \rightarrow X_{0} \times T$ such that $p=p' \circ h$, where $p' :X_{0} \times T \rightarrow T$ is the projection on the second factor}.\\

\noindent \textit{(b) We say that $p:X\rightarrow T$ is Whitney equisingular (or for simplicity, $X$ is Whitney equisingular) if the stratification} $ \lbrace X \setminus \sigma(T), \sigma(T) \rbrace $ \textit{satisfies Whitney's conditions (a) and (b) at $0$, that is:}\\

\textit{For any sequences of points $(x_n)\subset X \setminus \sigma(T)$ and $(t_n) \subset \sigma(T) \setminus \lbrace 0 \rbrace$ both converging to $0$ and such that the sequence of lines $(x_nt_n)$ converges to a line $l$ and the sequence of directions of tangent spaces, $T_{x_n}X$, to $X$ at $x_n$, converges to a linear space $H$ we have:}

\begin{flushleft}
\textit{(Whitney's condition a): The tangent space to $\sigma(T)$ at $0$ is contained in $H$.}\\
\textit{(Whitney's condition b): The line $l$ is contained in $H$.}
\end{flushleft}

\end{definition}

\begin{remark}\label{remarkwhitney} Let $(X,0) \subset (\mathbb{C}^{N+1},0) $ be a germ of a reduced and pure dimensional complex surface and consider a flat map $p:(X,0)\rightarrow (\mathbb{C},0)$ and a good representative $p:X\rightarrow T$.\\ 

\noindent (a) If the singular locus $\Sigma(X)$ of $X$ is not smooth, then $\lbrace X \setminus \Sigma(X), \Sigma(X) \rbrace$ is not an eligible stratification (with smooth strata) and $\lbrace X \setminus \Sigma(X), \Sigma(X) \setminus  0 , 0 \rbrace$ is a trivial Whitney stratification. In this way, in order to study Whitney's conditions along $\Sigma(X)$ we need to ask $\Sigma(X)$ to be smooth or to be the origin. For investigating equisingularity of a family of generically reduced curves $p:X\rightarrow T$, we assume in Definition \rm\ref{deftoptrivial} \textit{that the Milnor fiber $X_t$ is non singular outside the section $\sigma(T)$. This assumption implies that $\Sigma(X)=\sigma(T)$ or $\Sigma(X)=0$, since $\Sigma(X_t)=X_t \cap \Sigma(X)$ (see} \rm\cite[\textit{Lemma 8.1}]{greuel3}\textit{). So, in our case $\lbrace X \setminus \sigma(T), \sigma(T) \rbrace$ is in fact a stratification of $X$ with smooth strata.}\\
 
\noindent \textit{(b) Now, suppose that the Milnor fiber $X_t$ has another singular point outside the section $\sigma(T)$, again by} \rm\cite[\textit{Lemma 8.1}]{greuel3} \textit{we have the situation known as ``splitting of singularities''. In such a case, the singular locus $\Sigma(X)$ of $X$ is not smooth; so it does not make sense to talk about Whitney's conditions along $\Sigma(X)$. On the other hand, even if the singular locus of $X$ is not smooth, we can still consider topological triviality. However, we will not treat the case where $\Sigma(X)$ is singular in this work.}\\ 

\noindent \textit{(c) We will describe a ``special'' form of the map $p:X\rightarrow T$ that we will use at some specific moments of this work. From (a), we have that $\Sigma(X)=\sigma(T)$ or $\sigma(T)=0$. Thus, after a suitable change of coordinates in $\mathbb{C}^{N+1} \simeq \mathbb{C}^N \times \mathbb{C}$, we can assume that:}

\begin{enumerate}

\item \textit{The set $\Sigma(X)$ is $(0,\cdots,0) \times T$ or is the origin}.
  
\item \textit{The section $\sigma:T\rightarrow X$ is defined as $\sigma(t)=(0,\cdots,0,t)$, that is, $\sigma(T) =\{0\}\times T$}. 

\item \textit{The projection $p:X \rightarrow T$ is the restriction to $X$ of the canonical projection of $\mathbb{C}^N \times T$ on the last factor}.

\end{enumerate} 

\noindent \textit{(d) Consider the decomposition $X=X^1\cup \cdots \cup X^r$ of $X$ into irreducible components. Consider a sequence $(x_n)\subset X \setminus T$ converging to $0$. Note that the sequence could be decomposed into subsequences $(x_{ \tau(a_j(n))})\subset X^j$ for some $j$. Hence we have that $X$ is Whitney equisingular along $\sigma(T)$ if and only if $X^j$ is Whitney equisingular along $\sigma(T)\cap X^j$ for all $j=1,\cdots,r$.}

\end{remark}

\begin{theorem}\label{toptriviality} Let $p:(X,0)\rightarrow (\mathbb{C},0)$ be a family of generically reduced curves and suppose that there is a good representative $p:X\rightarrow T$ with a section $\sigma:T \rightarrow X$, such that both $\sigma(T)$ and $X_t \setminus \sigma(t)$ are smooth for $t \in T$. \textit{The following statements are equivalent:}

\begin{flushleft}
\textit{(1) $X$ is topologically trivial.} 

\textit{(2) $\delta(X_{t}, \sigma(t))$ and $r(X_{t},\sigma(t))$ are constant for all $t\in T$ and $(X,0)$ is equidimensional}.

\textit{(3) $\mu(X_{t},\sigma(t))$ is constant and the Milnor fiber $X_t$ is connected for all $t\in T$.}
\end{flushleft}

\end{theorem}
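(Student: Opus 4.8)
The plan is to reduce everything to one arithmetic identity together with the results already available for generically reduced curves. For any germ of generically reduced curve $(C,0)$ one has
$$\mu(C,0)=2\,\delta(C,0)-r(C,0)+1,$$
which follows from the defining formulas by substituting $\delta(|C|,0)=\delta(C,0)+\epsilon(C,0)$ into $\mu(|C|,0)=2\delta(|C|,0)-r(C,0)+1$ and using $2\,\epsilon(C,0)=\mu(|C|,0)-\mu(C,0)$. Applied fibrewise along the section this gives $\mu(X_t,\sigma(t))=2\delta(X_t,\sigma(t))-r(X_t,\sigma(t))+1$ for every $t\in T$. I would first record this, together with the remarks that $\epsilon(X_t,\sigma(t))=0$ for $t\neq 0$ (so that $\delta(X_t,\sigma(t))=\delta(|X_t|,\sigma(t))$ there), and that by Remark~\ref{remarkwhitney}(a) our standing hypotheses force $X_t$ to be smooth off $\sigma(t)$, so that in a good representative $\sum_{x\in X_t\cap B}\delta(X_t,x)=\delta(X_t,\sigma(t))$ and $\sum_{x\in X_t\cap B}\mu(X_t,x)=\mu(X_t,\sigma(t))$.

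For the equivalence $(1)\Leftrightarrow(3)$ I would invoke \cite[Th.~9.3]{greuel3} (completing \cite{cong}) applied to the good representative $p:X\to T$, using that a sufficiently small representative of a germ of curve is connected; this is precisely the statement that topological triviality is equivalent to constancy of $\mu(X_t,\sigma(t))$ together with connectedness of $X_t$. It then remains to fit $(2)$ into the list. The equidimensionality of $(X,0)$ is part of our standing assumptions (we keep it in $(2)$ only to mirror the reduced case, cf. \cite{buch}), so the point is to show that, for our families, constancy of $\delta(X_t,\sigma(t))$ and $r(X_t,\sigma(t))$ is equivalent to $(3)$. One direction is almost immediate: by the identity, constancy of $\delta$ and $r$ forces $\mu$ constant; and constancy of $\delta(X_t,\sigma(t))$, which here is constancy of $\sum_{x}\delta(X_t,x)$, yields equinormalizability by \cite[Kor.~3.2.1]{greuel}; feeding this together with constancy of $r$ into the conservation formula for $\delta$ in a family of generically reduced curves \cite{greuel3} --- whose defect term registers both the $\epsilon$-invariant and the disconnectedness of $X_t$ --- one gets that $X_t$ is connected, i.e.\ $(3)$. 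For the converse, constancy of $\mu$ and connectedness of $X_t$ give topological triviality by $(1)\Leftrightarrow(3)$; since the number of branches of a curve germ is a topological invariant and topological triviality can, for a family of curves, be taken to respect the singular section $\sigma(T)=\Sigma(X)$, it follows that $r(X_t,\sigma(t))=r(X_0,0)$ is constant, and then $\delta(X_t,\sigma(t))=\tfrac{1}{2}\big(\mu(X_t,\sigma(t))+r(X_t,\sigma(t))-1\big)$ is constant as well.

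The step I expect to be genuinely delicate is this last appeal to a conservation formula that forces connectedness in $(2)\Rightarrow(3)$: one must exclude a smooth component of $X_t$ separating from the section without ever creating a singular point of $X_t$, and this cannot be done set-theoretically --- it rests on the semicontinuity of $\delta$ for generically reduced curves \cite{greuel,greuel3} and on the connectedness of the germ $X_0$. The recurring subtlety underneath is that $(X_0,0)$ is only generically reduced, so $\delta(X_0,0)=\delta(|X_0|,0)-\epsilon(X_0,0)$ can be small or even negative and the $\epsilon$-invariant must be carried at every step; one must make sure the semicontinuity and conservation theorems are applied to $\mu(X_t,\sigma(t))$ and $\delta(X_t,\sigma(t))$ and not to their reduced counterparts, which is exactly why the identity $\mu=2\delta-r+1$, in which the $\epsilon$-terms cancel, is the efficient tool for the whole argument.
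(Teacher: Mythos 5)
First, a point of comparison: the paper does not prove Theorem \ref{toptriviality} at all --- it is quoted verbatim from \cite[Th.\ 9.3]{greuel3} (see also \cite[Th.\ 4.5]{cong}), and that cited theorem already contains all three equivalences, including the characterization by constancy of $\delta$ and $r$. Your proposal cites the same Theorem 9.3 for $(1)\Leftrightarrow(3)$ and then tries to re-derive the insertion of $(2)$. Several of your steps are sound: the identity $\mu(C,0)=2\delta(C,0)-r(C,0)+1$ does follow from the definitions (the $\epsilon$-terms cancel), so $(2)$ immediately gives $\mu$ constant; equidimensionality is built into Definition \ref{defsur}(b); and the direction $(1)/(3)\Rightarrow(2)$ can be completed, although not quite as you state it. You do not need (and Definition \ref{deftoptrivial}(a) does not provide) a trivialization respecting $\sigma(T)$: since $X_t\setminus\sigma(t)$ and $X_0\setminus\{0\}$ are smooth, the only possible non-manifold points of $X_t$ and $X_0$ are $\sigma(t)$ and $0$, and the local branch number is a topological invariant, so any homeomorphism $X_t\cong X_0$ forces $r(X_t,\sigma(t))=r(X_0,0)$ (treating the case $r=1$ separately); then $\delta$ is constant by the identity.

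The genuine gap is $(2)\Rightarrow$ connectedness of $X_t$, which you yourself flag as the delicate step but do not actually carry out. You appeal to a ``conservation formula for $\delta$ in a family of generically reduced curves whose defect term registers both the $\epsilon$-invariant and the disconnectedness of $X_t$'', without stating it; but such a statement, of the shape $\delta(X_0,0)\geq \sum_{x\in X_t}\delta(X_t,x)+b_0(X_t)-1$ for the Br\"ucker--Greuel $\delta$ (i.e.\ with the $\epsilon$-correction at the special fibre), is precisely the nontrivial content of \cite[\S 8--9]{greuel3} underlying Theorem 9.3 --- so the crux of your argument is deferred to the very result being quoted. Nor does the route you indicate (equinormalizability from $\delta$-constancy via \cite[Kor.\ 3.2.1]{greuel}, plus constancy of $r$) obviously suffice: equinormalizability controls the normalized fibres, but a smooth component of $X_t$ can separate from the section without changing $r(X_t,\sigma(t))$ or creating singular points, and whether the $\epsilon$-correction in $\delta(X_0,0)=\delta(|X_0|,0)-\epsilon(X_0,0)$ can absorb the intersection contribution of such a separating component is exactly what must be proved. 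Example \ref{exgreuel} shows the phenomenon is real and that only $\delta$ detects it ($\mu$ stays equal to $4$ while $\delta$ drops from $4$ to $3$ and $b_0(X_t)$ jumps to $2$, with $\epsilon(X_0,0)=1$ exactly accounting for the difference at the reduced level). As written, the key inequality is asserted rather than proved; either quote it precisely from \cite{greuel3} --- at which point one may as well quote Theorem 9.3 itself, as the paper does --- or supply a proof of it in the generically reduced setting.
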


Theorem \ref{toptriviality} is proved in \cite[Theorem $9.3$]{greuel3}, see also \cite[Theorem $4.5$]{cong}.\\ 

Let $W$ be a complex analytic space. As in \cite[Section 8]{greuel3}, we will denote the number of connected components of $W$ by $b_0(W)$. The following proposition shows us a way of computing the number of connected components of the Milnor fiber. It is stated and proved by Greuel in a more general context.

\begin{proposition}\label{propgreuel} (\rm\cite[\textit{Proposition 8.8}]{greuel3}\textit{) Let $p:(X,0)\rightarrow (\mathbb{C},0)$ be a family of generically reduced curves and consider a good representative $p:X\rightarrow T$. Then $b_0(X_t)=b_0(X \setminus \lbrace 0 \rbrace)$.}
\end{proposition}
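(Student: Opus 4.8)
\noindent\textit{Proof proposal.} The plan is to prove that both $b_0(X\setminus\{0\})$ and $b_0(X_t)$, for $t\neq 0$ small, equal the number of connected components of the \emph{incidence graph} $G$ of the germ $(X,0)$: its vertices are the irreducible components $X^1,\dots,X^r$ of $(X,0)$, and $X^i$ is joined to $X^j$ exactly when $\dim(X^i\cap X^j)=1$ (equivalently $X^i\cap X^j\neq\{0\}$). The equality $b_0(X\setminus\{0\})=b_0(G)$ is formal and uses nothing special: each $X^i\setminus\{0\}$ is connected (an irreducible germ minus a point), the $X^i\setminus\{0\}$ cover $X\setminus\{0\}$, and $(X^i\setminus\{0\})\cap(X^j\setminus\{0\})=(X^i\cap X^j)\setminus\{0\}$ is non-empty precisely on the edges of $G$; hence the connected components of $X\setminus\{0\}$ biject with those of $G$. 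So the whole content of the proposition is the analogous statement for $X_t$, and that is where generic reducedness is really used.

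The key local fact is: \emph{if a curve $\Gamma$ is contained both in the non-smooth locus $\Sigma(X)$ of $X$ and in $X_0$, then $X_0$ is non-reduced along $\Gamma$.} Indeed, at the generic point $\xi$ of $\Gamma$ the ring $\mathcal{O}_{X,\xi}$ is a one-dimensional reduced local ring which is not regular (as $X$ is singular at $\xi$), so its maximal ideal is not principal; since $p$ is a non-zero-divisor lying in that maximal ideal, $\mathcal{O}_{X_0,\xi}=\mathcal{O}_{X,\xi}/(p)$ has length $\geq 2$ and is not reduced. Two consequences under the hypothesis of generic reducedness: first, a one-dimensional component of $X^i\cap X^j$ lies in $\Sigma(X)$, hence is not contained in $X_0$, so $p$ is non-constant on $X^i\cap X^j$ and $X^i_t\cap X^j_t=(X^i\cap X^j)\cap X_t\neq\emptyset$ for all small $t\neq 0$ --- that is, the edges of $G$ are exactly the pairs with $X^i_t\cap X^j_t\neq\emptyset$; second, each curve $X^i_0:=X^i\cap X_0$ is again generically reduced (along a branch where it failed to be, $X_0$ itself would fail, either directly or via the local fact). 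Granting in addition that each $X^i_t$ is connected for small $t\neq 0$, the same nerve argument as before yields $b_0(X_t)=b_0(G)$, and the proof is complete.

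It remains to prove the main point: \emph{for every irreducible component $X^i$ the Milnor fibre $X^i_t$ is connected.} I would pass to the normalization $n\colon(\bar X,\bar 0)\to(X^i,0)$; then $\bar X$ is a normal surface germ, $\bar p:=p\circ n$ is non-constant, and it suffices to show $\bar p^{-1}(t)$ is connected, since $X^i_t=n(\bar p^{-1}(t))$ is its continuous image. Because $X^i_0$ is generically reduced and $n$ is an isomorphism off the one-dimensional non-normal locus of $X^i$ --- along which, by the local fact again, no branch of $X^i_0$ can lie --- the divisor $\bar p^{-1}(0)=n^{*}X^i_0$ on $\bar X$ is generically reduced. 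Since $\bar X$ is normal, the Stein factorization of $\bar p$ has the shape $\bar X\to(\mathbb{C},0)\xrightarrow{\,\tau\mapsto\tau^{d}\,}(\mathbb{C},0)$ with connected fibres, $d$ being the number of connected components of $\bar p^{-1}(t)$ for small $t\neq 0$; consequently $\bar p^{-1}(0)$ is $d$ times an effective divisor, and is generically reduced only when $d=1$. Hence $d=1$ and $\bar p^{-1}(t)$ is connected.

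The hard part is this last step --- the local lemma (where the flatness of $p$, through ``$p$ is a non-zero-divisor'', is essential) together with the Stein-factorization argument, including its correct formulation in the analytic-germ category and the identification of the degree of the Stein base with $b_0$ of the Milnor fibre. Once these are in place the combinatorics via $G$ is routine. That generic reducedness cannot be dropped is already visible for the flat family $p\colon(\mathbb{C}^{2},0)\to(\mathbb{C},0)$, $p(u,v)=u^{2}$, whose special fibre is a non-reduced double line, whose $X\setminus\{0\}$ is connected, but whose $X_t$ is a disjoint union of two lines. One could shortcut part of the topology by invoking the structure of good representatives (the restriction $X\setminus X_0\to T\setminus\{0\}$ is a locally trivial fibration, so all $X_t$, $t\neq 0$, are homeomorphic), but this is not needed, since $G$ does not depend on $t$.
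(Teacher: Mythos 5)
The paper itself offers no proof of this proposition: it is quoted from Greuel \cite[Prop.\ 8.8]{greuel3}, where it is proved in a more general context, so your argument can only be compared with that reference, not with an in-paper proof. With that caveat, your strategy is sound and I believe it works in exactly the generality of the statement (you never use a section $\sigma$ or smoothness of $X_t\setminus\sigma(t)$, only flatness and generic reducedness of $X_0$). The reduction of both $b_0(X\setminus\{0\})$ and $b_0(X_t)$ to the incidence graph $G$ is correct, and your local lemma is the right mechanism: for a curve $\Gamma\subset\Sigma(X)\cap X_0$, at the generic point of $\Gamma$ the local ring of $X$ is one-dimensional, reduced and non-regular, so dividing by the nonzerodivisor $p$ gives length at least $2$ and $X_0$ is non-reduced at points of $\Gamma\setminus\{0\}$, contradicting generic reducedness; this does rule out one-dimensional components of $X^i\cap X^j$ inside $X_0$ and branches of $X^i_0$ inside the non-normal locus of $X^i$. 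Two points need more care than you give them. First, ``$X$ is singular at every closed point of $\Gamma$'' does not by itself yield non-regularity of the localization at the generic point of $\Gamma$; you need openness of the regular locus (Jacobian criterion for analytic algebras), or, in your two actual applications, the direct observations that two distinct two-dimensional germ components pass through $\Gamma$ when $\Gamma\subset X^i\cap X^j$, and that integral closure commutes with localization when $\Gamma$ lies in the non-normal locus. Second, the Stein-factorization step, which you yourself flag as the hard part, is asserted rather than proved: to make it precise one should pass to the proper map $n^{-1}\bigl(X^i\cap \overline{B}\cap p^{-1}(\overline{T})\bigr)\rightarrow \overline{T}$ for a good (closed-ball) representative, use that the Stein factor of a normal space is normal, hence here a smooth curve, and that $\bar p^{-1}(0)$ is connected, so the factor germ is $(\mathbb{C},0)$ and $\bar p$ equals a unit times the $d$-th power of the factor map near the special fibre; then $d\geq 2$ forces every component of $\bar p^{-1}(0)$ to have multiplicity at least $2$, against its generic reducedness, and one must still identify $b_0$ of the fibre in the closed-ball representative with $b_0$ of the Milnor fibre (transversality of fibres to the boundary sphere). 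All of these are standard facts, so I regard your proposal as a correct and essentially complete alternative proof, but the Stein step should be written out before it can be called a full proof.
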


The following Example shows that the hypothesis about the connectivity of the Milnor fiber $X_t$ in Theorem \ref{toptriviality}(c) is in fact necessary. 

\begin{example}\label{exgreuel}(\rm\cite[\textit{Example 9.11(4)}]{greuel3}\textit{) Consider the surface $(X,0) \subset (\mathbb{C}^5 \times \mathbb{C},0)$ defined by the following ideal} 

\begin{center}
$ I_{(X,0)} = \langle xz, \ yz, \ xw, \ yw, \ zw-tz, \ w^2-tw, \ xv, \ yv, \ zv, \  wv, \ x^2y+xy^2+tx^2+ty^4 \rangle \mathcal{O}_6  $
\end{center}

\noindent \textit{where $\mathcal{O}_6 \simeq \mathbb{C}\lbrace x,y,z,w,v,t \rbrace$. We have that $(X,0)$ is a pure dimensional reduced germ of surface with $3$ irreducible components $(X^{1},0),(X^2,0)$ and $(X^3,0)$ defined by the following ideals}

\begin{center}
$I_{(X^1,0)} = \langle x^2y+xy^2+t(x^2+y^4), \ z, \ v, \ w \rangle \mathcal{O}_6$, $ \ \ $ $I_{(X^2,0)} = \langle   x, \ y, \ v, \ w-t \rangle \mathcal{O}_6$ $ \ \ $ \textit{and} $ \ \ $ $I_{(X^3,0)} = \langle  x, \ y, \ z, \ w \rangle \mathcal{O}_6$
 \end{center} 

\textit{Note that the restriction to $(X,0)$ of the linear projection $p:(\mathbb{C}^5 \times \mathbb{C},0)\rightarrow (\mathbb{C},0)$ in the last factor is flat. Thus, $p:(X,0)\rightarrow (\mathbb{C},0)$ is a family of generically reduced curves.} \textit{Take a good representative $p:X\rightarrow T$ and consider the section $\sigma:T\rightarrow X$ defined as $\sigma(t)=(0,\cdots,0,t)$. Note that $X_t \setminus \sigma(t)$ is smooth for all $t \in T$, thus we are in the conditions of Theorem} \rm\ref{toptriviality}. 

\textit{We have that $\epsilon(X_0,0)=1$, $\mu(|X_0|,0)=6$, thus $\mu(X_0,0)=4$. In the other hand, $X_t$ is reduced for $t\neq 0$, consisting of an $A_3$-singularity at $\sigma(t)$, a line through $\sigma(t)$ and another disjoint line. Thus, $\mu(X_t,\sigma(t))=4$ for all $t$, but $X_t$ is not connected. Clearly, we have also that $X$ is not topologically trivial, since the number of branches of $(X_t,\sigma(t))$ is not constant along $\sigma(T)$.}

\textit{We remark that the multiplicity of the fibers is not constant along the section $\sigma(T)$, more precisely, $m(X_0,0)=5$ and $m(X_t,\sigma(t))=3$ for $t\neq 0$. Note also that $X^2 \cap (X^1 \cup X^3)=0$, in particular, $X^2 \cap \sigma(T)=0$.}

\end{example}

Inspired in Example \ref{exgreuel} we have the following lemma which is essentially a consequence of Proposition \ref{propgreuel}.

\begin{lemma}\label{lemmaaux2} Let $p:(X,0)\rightarrow (\mathbb{C},0)$ be a family of generically reduced curves; the surface $(X,0)$ being equidimensional. Suppose there is a good representative $p:X\rightarrow T$ with a section $\sigma:T \rightarrow X$, such that both $\sigma(T)$ and $X_t \setminus \sigma(t)$ are smooth for $t \in T$.\\

\noindent (a) If $X$ is irreducible, then the Milnor fiber $X_t$ is connected for all $t \in T$.

\noindent (b) If $m(X_t,\sigma(t))$ is constant for all $t \in T$, then the Milnor fiber $X_t$ is connected for all $t \in T$.
\end{lemma}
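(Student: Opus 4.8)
The plan is to deduce both parts from Proposition \ref{propgreuel}, which identifies $b_0(X_t)$ with $b_0(X\setminus\{0\})$. So the whole task reduces to showing $X\setminus\{0\}$ is connected under either hypothesis. For part (a), if $X$ is irreducible then $X$ is connected, and removing a single point from an irreducible complex-analytic surface cannot disconnect it: locally $X\setminus\{0\}$ is a punctured neighborhood of a point on an irreducible surface germ, which is connected (the normalization is connected and the normalization map is finite and surjective, or alternatively one invokes that an irreducible analytic space minus a proper analytic subset is connected). Hence $b_0(X\setminus\{0\})=1$, and by Proposition \ref{propgreuel}, $b_0(X_t)=1$ for all $t$.

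For part (b), write the decomposition into irreducible components $X=X^1\cup\cdots\cup X^r$ as in Remark \ref{remarkwhitney}(d). Each $X^j$ is irreducible, so by part (a) each $X^j\setminus\{0\}$ is connected; thus $X\setminus\{0\}$ is connected if and only if the intersection graph on the components (vertices $X^j$, an edge when $X^i\cap X^j\neq\{0\}$, i.e. the components meet along a curve through $0$, which here must be a branch of $\sigma(T)$) is connected. The idea is that a failure of connectivity forces the components to split into two groups whose union-curves $(X_0,0)$ are supported on disjoint sets, and then the multiplicity of $X_0$ at $0$ is strictly larger than the multiplicity of the generic fiber $X_t$ — exactly the phenomenon exhibited in Example \ref{exgreuel}, where $X^2\cap(X^1\cup X^3)=0$ and $m(X_0,0)=5>3=m(X_t,\sigma(t))$. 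Concretely, if $X\setminus\{0\}$ is disconnected, partition the components as $X=X'\cup X''$ with $X'\cap X''=\{0\}$; then for small $t\neq 0$ the fibers $X'_t$ and $X''_t$ are disjoint (they can only meet inside $X'\cap X''=\{0\}$, and $\sigma(t)\neq 0$ lies in at most one of them), whereas $(X_0,0)=(X'_0,0)\cup(X''_0,0)$ with both pieces passing through $0$. Additivity of multiplicity over the pieces of a curve meeting only at $0$, together with semicontinuity $m(X_t,\sigma(t))\le m(X_0,0)$ applied piecewise, then yields $m(X_0,0)\ge m(X'_0,0)+m(X''_0,0) > m(X_t,\sigma(t))$, contradicting the constancy hypothesis.

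The step I expect to be the main obstacle is making the multiplicity-additivity argument fully rigorous in the generically non-reduced setting: the Hilbert–Samuel multiplicity $m(X_0,0)$ is taken on the possibly non-Cohen–Macaulay local ring $\mathcal{O}_{(X_0,0)}$, and one must be careful that the embedded component at $0$ does not spoil the inequality $m(X_0,0)\ge m(X'_0,0)+m(X''_0,0)$. The clean way around this is to work with the reduced curves $|X_0|$, $|X'_0|$, $|X''_0|$ — multiplicity depends only on the reduced structure and is additive over branches, and $\sigma(T)\subset\Sigma(X)$ guarantees $\sigma(t)$ is the unique point where the fiber can be singular, so the local multiplicities $m(X_t,\sigma(t))$ match those of $|X_t|$ — and to invoke the standard semicontinuity of multiplicity in a flat family (e.g. via the constancy needed forcing normal flatness, or directly from \cite[Lemma 8.1]{greuel3}) applied to each of the flat subfamilies $X'\to T$ and $X''\to T$. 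Alternatively, one can bypass multiplicity entirely and argue purely at the level of branches: constancy of $m$ together with Theorem \ref{toptriviality} would be circular, so the self-contained route above through $b_0$ and additivity of multiplicity is the one I would write out.
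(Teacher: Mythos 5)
Your proposal is correct and follows essentially the same route as the paper: reduce both parts to the connectedness of $X\setminus\{0\}$ via Proposition \ref{propgreuel}, and in part (b) split the irreducible components into the block containing $\sigma(T)$ and the block meeting it only at $0$ (the paper's index sets $A$ and $B$, your $X'$ and $X''$), then contradict constancy of multiplicity using additivity over components, upper semicontinuity along each flat subfamily, and the fact that multiplicity depends only on the reduced structure (Lemma \ref{lemmaauxiliar}). The differences are purely presentational, so no further comparison is needed.
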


\begin{proof} (a) By Proposition \ref{propgreuel} and irreducubility of $(X,0)$, we have $b_0(X_t)=b_0(X \setminus \lbrace 0 \rbrace)=1$, hence $X_t$ is connected for all $t\in T$.\\

\noindent (b) First, let $X= \bigcup _{i=1}^r X^i$ be the decomposition of $X$ into irreducible components. They all are of dimension two. The hypothesis that $X_t$ has at most a singular point at $\sigma(t)$ implies that $X^j \cap X^i=\sigma(T)$ or $X^j \cap X^i= \lbrace 0 \rbrace $ for $i \neq j$.

Let us decompose the set of indices as follows: $\{1, \ldots , r\} = A \cup B$ with $i\in A$ if and only if $\sigma(T) \subset X^i$ and $i\in B$ if and only if $\sigma(T) \cap X^i = \{0\}$. Suppose that $X_t$ is not connected, then by Proposition \ref{propgreuel}, we have that $X \setminus \lbrace 0 \rbrace$ is not connected. However $\sigma(T)\setminus \{0\}$ is connected. So, in this case $B \neq \emptyset$. For any $t\in T$, we can write $X_t= \bigcup_{i=1}^r X_t^i$ with $X_t^i = X_t \cap X^i$, is a generically reduced curve. By additivity of the multiplicity we have: 

\begin{center}
$m( X_t , \sigma(t)) = \displaystyle \sum_{i\in A\cup B}m(X_t^i, \sigma(t)),$
\end{center}

\noindent and for $t\neq 0$ we have:
 
 \begin{equation}\label{eq1}
 m(X_t, \sigma(t)) =  \displaystyle \sum_{i\in A}m(X_t^i, \sigma(t)).
 \end{equation}

The restriction of the projection $p$ to each component $X^i$ is a flat deformation of the curve $X_0^i$. So the multiplicity of any $X_t^i$ is upper semi-continuous. Together with equality (\ref{eq1}) and the fact that the fiber $X_0$ is connected, we have:

$$m(X_t,\sigma(t)) =  \displaystyle \sum_{i\in A} m(X_t^i, \sigma(t))
\leq  \displaystyle \sum_{i\in A} m(X_0^i, 0) < m(X_0,0),$$

\noindent which contradicts the constancy of the multiplicity.\end{proof}

In the reduced case, we have that $X$ is Whitney equisingular if and only if the Milnor number $\mu(X_{t},\sigma(t))$ and the multiplicity $m(X_{t},\sigma(t))$ are constant for all $t \in T$ \cite[Théorème III.3]{briancon}. Now, we show a generalization of this result to the generically reduced case.

\begin{theorem}\label{whitney1} Let $p:(X,0)\rightarrow (\mathbb{C},0)$ be a family of generically reduced curves, with $(X,0)$ reduced and equidimensional. Suppose that there is a good representative $p:X\rightarrow T$ with a section $\sigma:T \rightarrow X$, such that both $\sigma(T)$ and $X_t \setminus \sigma(t)$ are smooth for $t \in T$. \textit{Then the following conditions are equivalent:}

\begin{flushleft}
\textit{(1) $X$ is Whitney equisingular.}\\
\textit{(2) The invariants $\mu(X_{t},\sigma(t))$ and  $m(X_t,\sigma(t))$ are constant for all $t\in T$.}
\end{flushleft}

\end{theorem}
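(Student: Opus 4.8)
The plan is to prove the two implications separately, using Theorem~\ref{toptriviality} for the topological content and the known reduced-curve statement \cite[Th.~III.3]{briancon} --- together with the equivalence ``Whitney $\Leftrightarrow$ Zariski's discriminant criterion'' of \cite[Th.~3.1]{jawad} --- as the source of the Whitney conditions, the invariants of Br\"ucker--Greuel furnishing the dictionary between the reduced and the generically reduced settings. The starting observation is a reformulation of (2): from $\mu(|C|,0)=2\delta(|C|,0)-r(C,0)+1$, $\mu(C,0)=\mu(|C|,0)-2\epsilon(C,0)$ and $\delta(C,0)=\delta(|C|,0)-\epsilon(C,0)$ one gets the identity $\mu(X_t,\sigma(t))=2\delta(X_t,\sigma(t))-r(X_t,\sigma(t))+1$, so that, using Theorem~\ref{toptriviality} and Lemma~\ref{lemmaaux2}(b) (constancy of $m$ forces connectedness of the Milnor fibre), condition (2) becomes equivalent to: \emph{$X$ is topologically trivial and $m(X_t,\sigma(t))$ is constant along $T$}. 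By Remark~\ref{remarkwhitney}(d) the Whitney conditions may moreover be tested on each irreducible component, and an additivity/semicontinuity bookkeeping of $m$, $\delta$, $r$ and the pairwise intersection numbers (as in the proof of Lemma~\ref{lemmaaux2}(b), using that $\delta$ is upper semicontinuous, $r$ is constant and the intersection numbers are upper semicontinuous) lets one assume $(X,0)$ irreducible throughout; then Lemma~\ref{lemmaaux2}(a) makes the Milnor fibre connected automatically.

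For $(1)\Rightarrow(2)$: a Whitney equisingular family is topologically trivial by the Thom--Mather first isotopy lemma applied to the stratified set $X$ with strata $\{X\setminus\sigma(T),\sigma(T)\}$ over $T$, so by Theorem~\ref{toptriviality} the invariants $\delta$, $r$ and $\mu$ are constant. For the multiplicity one uses that Whitney's condition (b) along $\sigma(T)$ forces $X$ to be equimultiple along $\sigma(T)$ (Hironaka), i.e. $\mathrm{mult}_{\sigma(t)}(X)$ is independent of $t$; for $t\neq 0$ the family is analytically a product near $\sigma(t)$, whence $\mathrm{mult}_{\sigma(t)}(X)=m(X_t,\sigma(t))$, while for $t=0$ one notes that the possible embedded component of $X_0$ sits only at the origin, so its nilradical is a finite-dimensional $\mathbb{C}$-vector space and $m(X_0,0)=m(|X_0|,0)=\mathrm{mult}_0(X_0)$, and then that equimultiplicity of $X$ along $\sigma(T)$ together with the transversality of the hyperplane $\{t=0\}$ to $\sigma(T)$ yields $\mathrm{mult}_0(X_0)=\mathrm{mult}_0(X)$. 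Checking that the \emph{specific} hyperplane $\{t=0\}$ is transverse to the tangent cone $C_0(X)$ (and not merely to the section) is the one delicate point here, and I expect to need the full strength of equimultiplicity --- equivalently a superficiality argument for the coordinate $t$ in $\mathcal{O}_{X,0}$ --- to obtain it.

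For $(2)\Rightarrow(1)$: assume $X$ is topologically trivial, $m(X_t,\sigma(t))$ is constant, and (after the reduction above) $(X,0)$ is irreducible, so that $\delta(X_t,\sigma(t))$ and $r(X_t,\sigma(t))$ are also constant. If $X_0$ is reduced then $(X,0)$ is Cohen--Macaulay, $p$ is a genuine one-parameter deformation of a reduced curve, and the equivalence is exactly \cite[Th.~III.3]{briancon}; so the substantial case is the non Cohen--Macaulay one. There the plan is to run the reduced-case argument --- a generic projection and the equisingularity of its discriminant curve, as in \cite[Th.~3.1]{jawad}, or equivalently the constancy of the polar invariants --- but systematically replacing the classical curve invariants by those of Br\"ucker--Greuel; the key computation to carry out is that the embedded component of $X_0$ contributes to the discriminant precisely the correction term $2\epsilon(X_0,0)$, which is exactly the discrepancy $\mu(|X_0|,0)-\mu(X_0,0)$, so that constancy of $\mu(X_t,\sigma(t))$ and $m(X_t,\sigma(t))$ is equivalent to equisingularity of the (reduced) discriminant curve and hence to the Whitney conditions on $X$. \textbf{The step I expect to be the main obstacle} is precisely this last reduction: verifying that the discriminant/polar argument of the reduced curve case survives when the special fibre is only generically reduced, i.e. that one can still recover Whitney's conditions (a) and (b) from the purely numerical data $\bigl(\mu(X_t,\sigma(t)),m(X_t,\sigma(t))\bigr)$ once the term $\epsilon(X_0,0)$ is correctly accounted for.
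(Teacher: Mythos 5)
Your direction (1)~$\Rightarrow$~(2) is essentially the paper's: topological triviality plus Theorem~\ref{toptriviality} give the constancy of $\mu$, and the constancy of $m(X_t,\sigma(t))$ is exactly the equimultiplicity statement of Hironaka, which the paper simply quotes (\cite[Cor.~6.2]{hironaka}); the transversality/superficiality worry you flag is absorbed by that citation, so there is no need to argue that $X$ is ``analytically a product'' near $\sigma(t)$ for $t\neq 0$ (which is not automatic and which you should not assert). Likewise your reformulation of (2) as ``topologically trivial $+$ $m$ constant'' via Lemma~\ref{lemmaaux2}(b) and Theorem~\ref{toptriviality}, and the reduction to the irreducible case via Remark~\ref{remarkwhitney}(d) and semicontinuity of the multiplicity on each component, agree with the paper.

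The genuine gap is in (2)~$\Rightarrow$~(1). What you offer there is a plan, not a proof: you propose to transplant the discriminant/polar-invariant argument of \cite{briancon} and \cite{jawad}, and the whole weight rests on the unverified claim that the embedded component of $X_0$ contributes exactly the correction $2\epsilon(X_0,0)$ to the discriminant, a claim you yourself label as the expected main obstacle and never establish. Nothing in the proposal closes this; in particular it is not clear how a statement about the discriminant of a projection of the \emph{non-reduced} special fibre would be formulated, since the discriminant construction in the reduced case uses the curve structure of the fibres, not just numbers. The paper avoids discriminants entirely: since $X$ is topologically trivial, Greuel's theorem (\cite[Def.~2.1 and Th.~9.3]{greuel3}) provides a weak simultaneous resolution $\varphi:D\times T\to X$ which is a deformation of a parametrization of $(|X_0|,0)$; because $m(X_t,\sigma(t))$ is constant (and $m(X_0,0)=m(|X_0|,0)$ by Lemma~\ref{lemmaauxiliar}, so the embedded point is invisible to the multiplicity), after choosing the $x_1$-axis tangent to $X_0$ one may write
\begin{equation*}
\varphi(u,t)=\Bigl(u^{m}(1+b_1(t)),\ b_2(t)u^m+\sum_{i>m}(a_{2,i}+b_{2,i}(t))u^i,\ \ldots,\ t\Bigr),
\end{equation*}
and then Whitney's conditions (a) and (b) are verified by a direct computation: the limits of $\partial\varphi/\partial u$ (normalized) and $\partial\varphi/\partial t$ show the limiting tangent planes are spanned by $e_1$ and $e_t$, the secant lines $\overline{x_np(x_n)}$ converge to $e_1$, and one concludes with Whitney's criterion (\cite[Section~17]{whitney65}) that (a) together with the convergence of these secants yields (b). The key point you are missing is that the non-reducedness of $X_0$ only enters through the numerical invariants (via Lemma~\ref{lemmaauxiliar} and Theorem~\ref{toptriviality}); the Whitney conditions themselves concern the reduced surface $X$ and can be checked on the family of parametrizations, so the reduced-case computation of \cite{briancon} goes through verbatim once the normal form above is in hand. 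Without either carrying out your discriminant computation or switching to this parametric argument, the implication (2)~$\Rightarrow$~(1) remains unproved.
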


We denote by $e(q,R)$ the Hilbert-Samuel multiplicity of the $M(R)$-primary ideal $q$ in the local ring $(R,M(R))$ (see \cite[p. $107$]{matsumura}), where $M(R)$ denotes the maximal ideal of $R$. If $(R,M(R))$ is a zero-dimensional local ring, then we denote by $l(R,M(R))$ the length of $R$ (see \cite[Chapter $1$]{matsumura}). The key to proving Theorem \ref{whitney1} is the combination of Lemma \ref{lemmaaux2} and the following lemma.

\begin{lemma}\label{lemmaauxiliar}
Let $(C,0)$ be a germ of generically reduced curve with local ring $\mathcal{O}_{(C,0)}\simeq\dfrac{\mathcal{O}_{N}}{I_{(C,0)}}$. Consider $(|C|,0)$ the associated reduced curve, with local ring $\mathcal{O}_{(|C|,0)}\simeq\dfrac{\mathcal{O}_{N}}{\sqrt{I_{(C,0)}}}$. Then $m(C,0)=m(|C|,0)$.

\end{lemma}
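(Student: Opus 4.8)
The plan is to compare the two local rings through the canonical surjection and to use that Hilbert--Samuel multiplicity is insensitive to lower-dimensional pieces. Write $R:=\mathcal{O}_{(C,0)}\simeq \mathcal{O}_N/I_{(C,0)}$ and $\overline{R}:=\mathcal{O}_{(|C|,0)}\simeq \mathcal{O}_N/\sqrt{I_{(C,0)}}$, with maximal ideals $M(R)$ and $M(\overline{R})$; both rings are one-dimensional since $(C,0)$ and $(|C|,0)$ are curves. The surjection $R\twoheadrightarrow\overline{R}$ has kernel the nilradical $N=\sqrt{I_{(C,0)}}/I_{(C,0)}$, yielding a short exact sequence of finitely generated $R$-modules
$$0\longrightarrow N\longrightarrow R\longrightarrow \overline{R}\longrightarrow 0.$$
Because $(C,0)$ is generically reduced, $N$ is supported only at the origin; hence $\dim_{\mathbb{C}}N<\infty$ (this number is $\epsilon(C,0)$, see Remark~\ref{remark3.2}), so $\dim N=0<1=\dim R$.

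First I would invoke additivity of the Hilbert--Samuel multiplicity on short exact sequences of finitely generated modules together with its vanishing on modules of dimension strictly less than $\dim R$ (see \cite{matsumura}, \S 14); equivalently, one can argue by hand that the Hilbert--Samuel functions of $R$ and of $\overline{R}$ (as $R$-module) differ by the constant $\dim_{\mathbb{C}}N$ for $n\gg 0$, using Artin--Rees and $\bigcap_n M(R)^n N=0$. Either way, applying $e(M(R),-)$ to the sequence above gives
$$e\big(M(R),R\big)=e\big(M(R),N\big)+e\big(M(R),\overline{R}\big)=e\big(M(R),\overline{R}\big),$$
since $e(M(R),N)=0$; and by definition the left-hand side is $m(C,0)$.

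It remains to identify $e(M(R),\overline{R})$ with $m(|C|,0)$. Since $R\twoheadrightarrow\overline{R}$ carries $M(R)$ onto $M(\overline{R})$, we have $M(R)^n\overline{R}=M(\overline{R})^n$ for all $n$, so the $M(R)$-adic filtration of the $R$-module $\overline{R}$ coincides with its $M(\overline{R})$-adic filtration; moreover $R$ and $\overline{R}$ share the residue field $\mathbb{C}$, hence the finite-length modules $\overline{R}/M(\overline{R})^{\,n}$ have the same length over $R$ and over $\overline{R}$. Therefore the Hilbert--Samuel function of $\overline{R}$ as an $R$-module for $M(R)$ equals that of the ring $\overline{R}$ for $M(\overline{R})$, whence $e(M(R),\overline{R})=e(M(\overline{R}),\overline{R})=m(|C|,0)$. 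Combining with the previous display yields $m(C,0)=m(|C|,0)$.

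I expect the only delicate point to be this last bookkeeping step — verifying that the multiplicity of $\overline{R}$ computed as an $R$-module agrees with its intrinsic multiplicity as a ring — while everything else is a direct application of standard multiplicity theory once the finiteness $\dim_{\mathbb{C}}N<\infty$, ensured by generic reducedness, is available. A clean alternative that avoids the module computation altogether is the associativity formula for multiplicities: the minimal primes of $R$ and of $\overline{R}$ coincide (nilpotents do not affect $\mathrm{Spec}$), all of them satisfy $\dim R/\mathfrak{p}=1=\dim R$, the localization $R_{\mathfrak{p}}$ is reduced — hence a field, of length one — by generic reducedness, and $R/\mathfrak{p}\simeq \overline{R}/\overline{\mathfrak{p}}$ for each such $\mathfrak{p}$; so $e(M(R),R)=\sum_{\mathfrak{p}}\mathrm{length}(R_{\mathfrak{p}})\,e\big(M(R/\mathfrak{p}),R/\mathfrak{p}\big)=\sum_{\mathfrak{p}}e\big(M(\overline{R}/\overline{\mathfrak{p}}),\overline{R}/\overline{\mathfrak{p}}\big)=e(M(\overline{R}),\overline{R})$, which is precisely $m(C,0)=m(|C|,0)$.
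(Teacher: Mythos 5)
Your proof is correct, but your main argument takes a genuinely different route from the paper's. The paper argues exactly along the lines of the \emph{alternative} you sketch in your last sentence: it invokes the associativity (Serre) formula of \cite{matsumura}, Th.~14.7, for both $\mathcal{O}_{(C,0)}$ and $\mathcal{O}_{(|C|,0)}$, observing that the two rings have the same minimal primes $P_1,\dots,P_r$, that $\mathcal{O}_{(C,0)}/P_j\simeq\mathcal{O}_{(|C|,0)}/P_j$, and that generic reducedness (Serre's condition $R_0$) forces the localizations at each $P_j$ to be reduced Artinian local rings, hence fields of length one, so the two sums coincide term by term. Your primary route instead uses the short exact sequence $0\to N\to R\to \overline{R}\to 0$ with $N$ the nilradical, the finiteness $\dim_{\mathbb{C}}N=\epsilon(C,0)<\infty$ coming from generic reducedness, additivity of the Hilbert--Samuel multiplicity together with its vanishing on modules of dimension strictly less than $\dim R$, and the correctly handled identification $e(M(R),\overline{R})=e(M(\overline{R}),\overline{R})$ via $M(R)^n\overline{R}=M(\overline{R})^n$ and the equality of lengths over $R$ and over $\overline{R}$. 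This is sound and somewhat more elementary: it avoids localization at minimal primes altogether and in fact gives slightly more, namely that the Hilbert--Samuel functions of $R$ and $\overline{R}$ agree up to the constant $\epsilon(C,0)$ for $n\gg 0$. What the paper's route buys is brevity given the reference and internal consistency, since the same associativity formula is reused later in the proof of Lemma~\ref{whitney2}(a).
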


\begin{proof}
Let $M(\mathcal{O}_{(C,0)})$ and $M(\mathcal{O}_{(|C|,0)})$ be the maximal ideals of $\mathcal{O}_{(C,0)}$ and $\mathcal{O}_{(|C|,0)}$, respectively. Let $\lbrace P_{1}, \cdots, P_{r} \rbrace$ be the set of all minimal prime ideals of $\mathcal{O}_{(C,0)}$. For all $j=1, \cdots, r$, we have the following commutative diagram

 \[\xymatrix{\mathcal{O}_{(C,0)}\simeq  \dfrac{\mathbb{C}\lbrace x_1,\cdots,x_N \rbrace}{I_{(C,0)}} \ \ \  \ar[r]^{\pi_1} \ar[dd]^{\pi_2}  & \ \ \  \dfrac{\mathbb{C}\lbrace x_1,\cdots,x_N \rbrace}{\sqrt{I_{(C,0)}}}  \simeq \mathcal{O}_{(|C|,0)}  \ar[ddl]^{\pi_3}  \\
             &  \\
             \dfrac{\mathbb{C}\lbrace x_1,\cdots, x_N \rbrace}{P_j}  &  }\]

\noindent where $\pi_{1}, \pi_{2}$ and $\pi_{3}$ are quotient morphisms. We have that $\mathcal{O}_{(C,0)}/P_{j} \simeq \mathcal{O}_{(|C|,0)}/P_{j}$, hence 

\begin{center}
$e \left( M  \left(   \dfrac{\mathcal{O}_{(C,0)}}{P_{j}}   \right)  , \dfrac{\mathcal{O}_{(C,0)}}{P_{j}} \right) =e  \left( M \left( \dfrac{\mathcal{O}_{(|C|,0)}}{P_{j}} \right) , \dfrac{\mathcal{O}_{(|C|,0)}}{P_{j}} \right) $ 
\end{center}

\noindent for all $j=1, \cdots, r$, where $M(\mathcal{O}_{(C,0)}/P_{j})$ and $M (\mathcal{O}_{(|C|,0)}/P_{j})$ are the maximal ideals of $\mathcal{O}_{(C,0)}/P_{j}$ and $\mathcal{O}_{(|C|,0)}/P_{j}$, respectively.

We have that $\mathcal{O}_{(C,0)}$ and $\mathcal{O}_{(|C|,0)}$ satisfy the Serre's condition $R_{0}$ (see \cite[p. $183$]{matsumura}) and the ideal $P_{j}$ has height $0$, hence the localizations ${(\mathcal{O}_{(C,0)})}_{P_{j}}$ and ${(\mathcal{O}_{(|C|,0)})}_{P_{j}}$ are regular local rings and also are integral domains. Since ${(\mathcal{O}_{(C,0)})}_{P_{j}}$ and ${(\mathcal{O}_{(|C|,0)})}_{P_{j}}$ are Artinian, they are fields and we have that ${(\mathcal{O}_{(C,0)})}_{P_{j}}\simeq{(\mathcal{O}_{(|C|,0)})}_{P_{j}}\simeq \mathbb{C}$, hence we obtain for the lengths that $l({(\mathcal{O}_{(C,0)})}_{p_{j}})=l({(\mathcal{O}_{(|C|,0)})}_{P_{j}})=1$. By Serre's formula (see \cite{matsumura}, Th. 14.7) we have that

\begin{center}
$ \ \ \ \ \ \ \ \ \ \ \ \ \ \ \ m(C,0)=\displaystyle { \sum_{j=1}^{r}}e(M (\mathcal{O}_{(C,0)}/P_{j}),\mathcal{O}_{(C,0)}/P_{j})\cdot l({(\mathcal{O}_{(C,0)})}_{P_{j}})= \ \ \ \ \ \ \ \ \ \ \ \ \ $\\

$ \ \ \ \ \ \ \ \ \ \ \ \ \ \ \ \ \ \ \ \ \ \ \ \ \ \ \ \ \ \ \ \ \ \ \ \ \ \ \ \displaystyle { \sum_{j=1}^{r}}e(M (\mathcal{O}_{(|C|,0)}/P_{j}),\mathcal{O}_{(|C|,0)}/P_{j})\cdot l({(\mathcal{O}_{(|C|,0)})}_{P_{j}})=m(|C|,0). \ \ \ \ \ \ \ \ \ \ \ \ \ \ \ \ \ \ \ \ \ \ \ \ \ \  \ \ \ \ \ \ \ \ \ \ \ \qedhere $ 
\end{center}

\end{proof}

We are now able to prove Theorem \ref{whitney1}.

\begin{proof}(of Theorem \ref{whitney1}) By Lemmas \ref{lemmaauxiliar} and \ref{lemmaaux2}, we can prove the statement essentially in the same way as in the reduced case (see \cite{briancon}, Theorem III.3), we include the proof for completeness.\\

\noindent (1) $\Rightarrow$ (2) Since $X$ is Whitney equisingular, then $X$ is topologically trivial and by Theorem \ref{toptriviality} we have that $\mu(X_{t},\sigma(t))$ is constant. The constancy of $m(X_{t},\sigma(t))$ follows by a result of Hironaka; see \cite{hironaka}, Cor. 6.2.
 
\noindent (2) $\Leftarrow$ (1) Let $X^1,\cdots, X^r$ be the irreducible components of $X$ and denote by $X_t^j=p_j^{-1}(t)$ the fibers of $p_j$ where $p_j$ is the restriction of $p$ to $X^j$. By the additivity property of the multiplicity we have that

\begin{center}
$m(X_t,\sigma(t))=m(X_t^1,\sigma(t))+\cdots+ m(X_t^r,\sigma(t))$, $ \ \ $ for all $t$.
\end{center}

Since the multiplicity is an upper semi-continuous invariant, we have that $m(X_t,\sigma(t))$ is constant if and only if $m(X_t^j,\sigma(t))$ is constant for all $j$. 

In the other hand, since $m(X_t,\sigma(t))$ is constant, then by Lemma \ref{lemmaaux2} the Milnor fiber $X_t$ is connected for all $t\in T$. Since the Milnor number $\mu(X_t,\sigma(t))$ is constant, then $X$ is topologically trivial, by Theorem \ref{toptriviality}. This implies that $X^j$ is also topologically trivial for all $j=1,\cdots,r$, hence $\mu(X^j_t,\sigma(t))$ is constant, again by Theorem \ref{toptriviality}. Therefore, since the constancy of $m(X_t,\sigma(t))$ and $\mu(X_t,\sigma(t))$ implies the constancy of $m(X_t^j,\sigma(t))$ and $\mu(X_t^j,\sigma(t)$ for all $j$, by Remark \ref{remarkwhitney} (d), it is sufficient to prove the irreducible case.

Suppose $X$ is irreducible and that $p:X\rightarrow T$ and $\sigma:T\rightarrow X$ are as in Remark \ref{remarkwhitney} (c). In order to prove Whitney condition (b), it is enough to prove condition (a) and that, for every sequence of points $(x_n)$ in $X \setminus \sigma(T) $ converging to the origin, the sequence of lines generated by $x_n$ and $p(x_n)$ converges to a line contained in the limit of tangent spaces $T_{x_n}(X)$ (see \cite[Section 17]{whitney65}).

After a change of coordinates we can suppose also that the axis $x_1$ is tangent to $X_0$. Since $X$ is topologically trivial we have that $X$ admits a weak simultaneous resolution (see \cite[Def. $2.1$ and Th. $9.3$]{greuel3}) $\varphi : D \times T\rightarrow X \subset B \times T$, which can be given as a deformation of a parametrization of $(|X_0|,0)$. That is, $\varphi$ can be defined by 

\begin{center}
$\varphi(u,t):=\left(u^{m}(1+b_1(t)) \ , \ b_2(t)u^m + \displaystyle { \sum_{i>{m}}^{}}(a_{2,i}+b_{2,i}(t))u^i   \  , \ \cdots \ , \ b_N(t)u^m + \displaystyle { \sum_{i>{m}}^{}}(a_{N,i}+b_{N,i}(t))u^i  \ , \ t \right)$
\end{center}

\noindent such that, for all $t$, the restriction $\varphi$ to $D \times \lbrace t \rbrace$ is the normalization map of $|X_t|$, where $a_{j,i} \in \mathbb{C}$, $b_j(t), b_{j,i}(t) \in \mathbb{C} \lbrace t \rbrace$, $b_j(0)=b_{j,i}(0)=0$ for all $j$ and $i$, $m=m(X_t,\sigma(t))$, $D$ is an open ball around $0\in \mathbb{C}$ and $B,T$ are as in Definition \ref{goodrepresentative}. Consider the maps 

\begin{center}
$v(u,t):= \dfrac{\partial \varphi}{\partial u}(u,t)$ $ \ \ \ \ $ and $ \ \ \ \ $ $w(u,t):= \dfrac{\partial \varphi}{\partial t}(u,t)$. 
\end{center}

Let $x_n=\varphi(u_n,t_n)$ be a sequence of points in $X \setminus T$. Note that when $x_n\rightarrow 0$, the limit of tangent planes to $X$ at the points $x_n$ contains the limit of the vectors $ \left( \frac{1}{m(1+b_1(t))u^{m-1}} \right) v(u_n,t_n)$ and $w(u_n,t_n)$, which are precisely:

\begin{center}
$\lim \ \left( \frac{1}{m(1+b_1(t))u^{m-1}} \right)v(u_n,t_n)=(1,0,\cdots,0):=e_1$ $ \ \ \ \  $ and $ \ \ \ \ $ $\lim \ w(u_n,t_n)=(0,\cdots,0,1):=e_t$
\end{center}

\noindent when $(u_n,t_n)\rightarrow 0$. Hence, the limit of tangent planes to $X$ at the points $x_n$ is the plane generated by  the vectors $e_1$ and $e_t$. Since the tangent space to $\sigma(T) = T $ at $0$ is the line generated by the vector $e_t$, then Whitney condition (a) is satisfied. Note that

\begin{center}
$\overline{x_n p(x_n)}=\left(u^{m}(1+b_1(t)) \ , \ b_2(t)u^m + \displaystyle { \sum_{i>{m}}^{}}(a_{2,i}+b_{2,i}(t))u^i   \  , \ \cdots \ , \ b_N(t)u^m + \displaystyle { \sum_{i>{m}}^{}}(a_{N,i}+b_{N,i}(t))u^i  \ , \ 0 \right)$.
\end{center}

Hence, we have that $\lim \ \left(\dfrac{1}{u^{m}(1+b_1(t))}\right) \overline{x_n p(x_n)} =e_1$ when $x_n, p(x_n)\rightarrow 0$, therefore Whitney condition (b) is also satisfied.\end{proof}

\begin{example}\label{exemplo3.9} (a) Let us consider the Example \rm \ref{exemplo2.1}. \textit{Take a good representative $p:X\rightarrow T$ of $p:(X,0)\rightarrow (\mathbb{C},0)$ and consider the section $\sigma:T\rightarrow X$ defined as $\sigma(t)=(0,0,0,t)$. Note that $X_t \setminus \sigma(t)$ is smooth for all $t \in T$. We have that $\mu(X_t,\sigma(t))$ is constant and since $X$ is irreducible, $(X_t,\sigma(t))$ is connected for all $t$ by Lemma} \rm\ref{lemmaaux2}. \textit{ Hence, by Theorem} \rm\ref{toptriviality} \textit{ $X$ is topologically trivial. Take $t_0 \in T$, note that the restriction $n_{t_0}(u)=(u^3,u^4,t_0u)$ of the map}

$$\begin{array}{rcl}
n:(\mathbb{C}^2,0) & \rightarrow & (X,0) \subset (\mathbb{C}^4,0)\\
(u,t) & \mapsto & (u^3, u^4, tu, t)\\
\end{array}
$$

\noindent \textit{to $(\mathbb{C} \times \lbrace t_0 \rbrace,0)$ is a parametrization of $(|X_{t_0}|,\sigma(t_0))$; here we are considering the identifications $\mathbb{C} \times \lbrace t_0 \rbrace \simeq \mathbb{C}$ and $\mathbb{C}^3 \times \lbrace t_0 \rbrace \simeq \mathbb{C}^3$. From the parametrization, we see that $m(X_0,0)=3$ and $m(X_t,\sigma(t))=1$ for $t\neq 0$, therefore $X$ is not Whitney equisingular, by Theorem} \rm\ref{whitney1}.\\

\noindent \textit{(b) Let $(X,0)$ be the germ of surface parametrized by the map}

$$\begin{array}{rcl}
n:(\mathbb{C}^2,0) & \rightarrow & (X,0) \subset (\mathbb{C}^4,0)\\
(u,t) & \mapsto & (u^3, u^4, tu^5, t)\\
\end{array}
$$

\noindent \textit{it is a reduced surface defined by the ideal}
 
$$I_{(X,0)}= \langle xz-ty^2,yz-tx^3,z^2-t^2x^2y,y^3-x^4 \rangle \mathcal{O}_4.$$

\textit{Again the projection $p$ to the last factor makes the surface into a one parameter deformation of the curve $(X_0,0)$ defined by the ideal:} 

$$I_{(X_0,0)}= \langle xz,yz,z^2,y^3-x^4 \rangle \mathcal{O}_3  = \langle z,y^3-x^4 \rangle \mathcal{O}_3 \cap \langle x^4,xz,y,z^2 \rangle \mathcal{O}_3.$$

\textit{One can see that the curve $(X_0,0)$ has an embedded component at the origin, so $\mathcal{O}_{(X_0,0)}$ is not Cohen-Macaulay, (}\rm\cite[\textit{Corollary 6.5.5}]{jong}\textit{). Take a good representatice $p:X\rightarrow T$ and the section $\sigma:T\rightarrow X$ defined as $\sigma(t)=(0,0,0,t)$. We have that $X_t \setminus \sigma(t)$ is smooth. As we did in the previous example, we can check that $\mu(X_t,\sigma(t))=4$ and $m(X_t,\sigma(t))=3$ for all $t \in T$. Hence, by Theorem} \rm\ref{whitney1}, \textit{$X$ is Whitney equisingular. We remark that this example appears in} \rm\cite[\textit{Example 2.4}]{jawad} \textit{where another proof of Whitney equisingularity is presented.}
\end{example}

\section{Strong simultaneous resolution}

$ \ \ \ \ $ In this section, we compare Whitney equisingularity to strong simultaneous resolution.

\begin{definition} Let $p:(X,0)\rightarrow (\mathbb{C},0)$ be a family of generically reduced curves. Suppose $p:X\rightarrow T$ is a good representative with a section $\sigma:T \rightarrow X$, such that both $\sigma(T)$ and $X_t \setminus \sigma(t)$ are smooth for $t \in T$. \textit{Let $n: \widehat{X}\rightarrow X$ be the normalization of $X$. Denote $\widehat{p}:=p \circ n: \widehat{X}\rightarrow T$. We say that $p$ admits a strong simultaneous resolution if $X$ is topologically trivial and}

\begin{center}
\textit{$n^{-1}(\sigma(T))\cong n^{-1}(0) \times T \ \ \ $ (over $T$).}
\end{center}

\textit{In others words, $n^{-1}(T)\cong n^{-1}(0) \times T$, using that $\sigma(T)\simeq T$. Here, we consider the spaces $n^{-1}(T)$ and $n^{-1}(0) \times T$ are isomorphic with the analytic structure inherited from the normalization map.}

\end{definition}

In the reduced case, we have that $X$ is Whitney equisingular if and only if $X$ admits a strong simultaneous resolution. Now, we show that this result also holds for the generically reduced case.

\begin{theorem}\label{whitney3}
Let $p:(X,0)\rightarrow (\mathbb{C},0)$ be a family of generically reduced curves with $(X,0)$ equidimensional. Suppose $p:X\rightarrow T$ is a good representative with a section $\sigma:T \rightarrow X$, such that both $\sigma(T)$ and $X_t \setminus \sigma(t)$ are smooth for $t \in T$. \textit{Then the following conditions are equivalent:}

\begin{flushleft}
\textit{(1) $(X,0)$ is Whitney equisingular;}\\
\textit{(2) $p:(X,0)\rightarrow(\mathbb{C},0)$ admits a strong simultaneous resolution.}
\end{flushleft}

\end{theorem}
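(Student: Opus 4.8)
The plan is to reduce Theorem \ref{whitney3} to Theorem \ref{whitney1} by showing that, once $X$ is topologically trivial, the condition $n^{-1}(\sigma(T))\cong n^{-1}(0)\times T$ over $T$ is equivalent to the constancy of $m(X_t,\sigma(t))$. First note that both (1) and (2) force $X$ to be topologically trivial --- (2) by definition, and (1) because Whitney equisingularity implies topological triviality --- so I would start from a topologically trivial good representative and put $p$, $\sigma$ and the coordinates in the special form of Remark \ref{remarkwhitney}(c), so that $\sigma(T)=\{0\}\times T$ and, reading the fibres inside $\mathbb C^N$, the point $\sigma(t)$ becomes the origin of $\mathbb C^N$ for every $t$. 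A preliminary observation, implicit in the proof of Lemma \ref{lemmaaux2}(b), is that topological triviality forces every irreducible component of $X$ to contain $\sigma(T)$: if some $X^j$ met $\sigma(T)$ only at $0$ then $X^j\cap X^i\subset\{0\}$ for $i\neq j$, so for $t\neq 0$ the nonempty curve $X^j_t$ would be a union of connected components of $X_t$ disjoint from $\sigma(t)$, making $X_t$ disconnected while $X_0$ is connected, against Proposition \ref{propgreuel}. Writing $X=X^1\cup\cdots\cup X^r$, each $X^j$ is then again topologically trivial (proof of Theorem \ref{whitney1}) and contains $\sigma(T)$; the normalization splits as $\widehat X=\bigsqcup_j\widehat{X^j}$ with $n^{-1}(\sigma(T))=\bigsqcup_j n_j^{-1}(\sigma(T))$ and $n^{-1}(0)=\bigsqcup_j n_j^{-1}(0)$. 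Since an isomorphism of $T$-spaces matches up connected components, using Remark \ref{remarkwhitney}(d) for Whitney equisingularity and additivity together with upper semicontinuity of the multiplicity for the constancy of $m(X_t,\sigma(t))$, the whole statement reduces to the case where $X$ is irreducible.

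Assume now $X$ irreducible and topologically trivial. Recall from the proof of Theorem \ref{whitney1} that the normalization then has the form $n:D\times T\to X$ with a single disc $D$, coordinates $(u,t)$, $n(u,t)=(\varphi_1(u,t),\dots,\varphi_N(u,t),t)$, and $u\mapsto(\varphi_1(u,t),\dots,\varphi_N(u,t))$ the normalization of the irreducible branch $|X_t|$. The subspace $n^{-1}(\sigma(T))$ carries the scheme structure defined by $J:=n^{\ast}I_{\sigma(T)}=(\varphi_1,\dots,\varphi_N)\subset\mathbb C\{u,t\}$, and its reduction is $\{u=u(t)\}$ for a unique holomorphic $u(t)$ (being reduced, finite and bijective over the normal curve $T$, the curve $n^{-1}(\sigma(T))_{\mathrm{red}}$ is isomorphic to $T$); after the holomorphic change $u\mapsto u-u(t)$ one has $\sqrt J=(u)$. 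Because $\mathbb C\{u,t\}$ is a two-dimensional regular local ring, hence a unique factorization domain, I would factor $J=(u)^{m_0}J'$ with $u$ dividing no generator of $J'$; then $V(J')$ is zero-dimensional, so $J'$ is either the unit ideal or primary to $M(\mathbb C\{u,t\})$. Restricting to a fibre $t=t_0$ gives $\mathbb C\{u\}/J_{t_0}$ with $J_{t_0}=(u)^{m_0}J'_{t_0}$; for $t_0\neq 0$ one has $V(J'_{t_0})=\varnothing$, so $J'_{t_0}=(1)$ and $\dim_{\mathbb C}\mathbb C\{u\}/J_{t_0}=m_0$, while for $t_0=0$ one has $J'_0=(u^a)$ with $a\geq 1$ exactly when $J'\neq(1)$, so $\dim_{\mathbb C}\mathbb C\{u\}/J_0=m_0+a$. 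Finally, since the multiplicity of a curve branch at a point equals the minimum of the orders of the coordinate functions of any parametrization centered at that point, the number $\dim_{\mathbb C}\mathbb C\{u\}/J_{t_0}$ equals $m(|X_{t_0}|,\sigma(t_0))$, which by Lemma \ref{lemmaauxiliar} equals $m(X_{t_0},\sigma(t_0))$; thus $m(X_{t},\sigma(t))=m_0$ for $t\neq 0$ and $m(X_0,0)=m_0+a$.

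The two implications now follow. If $p$ admits a strong simultaneous resolution then $n^{-1}(\sigma(T))\cong n^{-1}(0)\times T$ over $T$, so all fibres of $n^{-1}(\sigma(T))\to T$ have the same $\mathbb C$-dimension, namely $\dim_{\mathbb C}n^{-1}(0)=m(X_0,0)$; hence $m(X_t,\sigma(t))$ is constant, and since topological triviality gives constancy of $\mu(X_t,\sigma(t))$ (Theorem \ref{toptriviality}), Theorem \ref{whitney1} shows that $X$ is Whitney equisingular. Conversely, if $X$ is Whitney equisingular then $m(X_t,\sigma(t))$ is constant by Theorem \ref{whitney1}, whence $a=0$ in the factorization above, i.e.\ $J=(u^{m_0})$ is principal; thus $n^{-1}(\sigma(T))=\mathbb C\{u,t\}/(u^{m_0})$ is the trivial family over $T$ with fibre $\mathbb C\{u\}/(u^{m_0})=n^{-1}(0)$, i.e.\ $n^{-1}(\sigma(T))\cong n^{-1}(0)\times T$ over $T$, and as Whitney equisingularity implies topological triviality, $p$ admits a strong simultaneous resolution. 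Unwinding the reduction to the irreducible case completes the argument.

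The step I expect to be the main obstacle is the local analysis of $n^{-1}(\sigma(T))$: one must be careful that the relevant structure is the pullback of the \emph{reduced} ideal of $\sigma(T)$, not the reduced preimage, and that its non-reducedness along the section --- encoded by the factor $J'$ --- measures exactly the jump between the generic multiplicity $m_0$ and $m(X_0,0)$; this is what ties the purely resolution-theoretic triviality of $n^{-1}(\sigma(T))$ to the numerical invariant $m$. A secondary, but genuine, difficulty is to handle the normalization in the merely topologically trivial setting --- where the explicit parametrization used in the proof of Theorem \ref{whitney1} (which presupposes $m$ constant) is not available --- and to control the irreducible components of $X$ via Proposition \ref{propgreuel}.
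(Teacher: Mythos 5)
Your argument is correct, and its overall skeleton matches the paper's: reduce to the irreducible case, use topological triviality to put the normalization in the form $n(u,t)=(\varphi_1(u,t),\dots,\varphi_N(u,t),t)$ with $\sqrt{\langle n^{\ast}(x_1,\dots,x_N)\rangle}=\langle u\rangle$, show that Whitney equisingularity forces $\langle n^{\ast}(x_1,\dots,x_N)\rangle=\langle u^{m}\rangle$, and read the triviality of $n^{-1}(\sigma(T))$ off that principality (and conversely). Where you genuinely diverge is the mechanism tying the ideal to the constancy of the multiplicity. The paper goes through Lemma \ref{whitney2}: $\overline{t}$ is a parameter in $\mathcal{O}_{2}/\langle n_j^{\ast}(x_1,\dots,x_N)\rangle$, the colength of $(\overline{t})$ computes $m(X_0,0)$ while its Hilbert--Samuel multiplicity computes $m(X_t,\sigma(t))$ for $t\neq 0$, so by Matsumura's Theorem 17.11 Whitney equisingularity (equivalently, constancy of $m$, by Theorem \ref{whitney1}) is equivalent to Cohen--Macaulayness of these rings; absence of embedded primes then gives $\langle n^{\ast}(x_1,\dots,x_N)\rangle=\langle u^m\rangle$. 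You bypass the Cohen--Macaulay/Hilbert--Samuel machinery entirely: the factorization $J=u^{m_0}J'$ in the UFD $\mathbb{C}\{u,t\}$ plus a direct computation of the fibres of $n^{-1}(\sigma(T))\to T$ gives $m(X_t,\sigma(t))=m_0$ for $t\neq 0$ and $m(X_0,0)=m_0+a$, after which both implications follow from Theorem \ref{whitney1}. This is more elementary and makes explicit that the embedded structure of $J$ along the section measures exactly the multiplicity jump; the trade-off is that it does not yield the Cohen--Macaulay criterion of Lemma \ref{whitney2}(b), which the paper states as a result of independent interest and reuses in Example \ref{exemplo3.8}. Two minor points: you cannot arrange that $u$ divides no generator of $J'$, only that it does not divide all of them (i.e.\ $J'\not\subset\langle u\rangle$), which is all your argument actually uses; and your claim that $|n^{-1}(\sigma(T))|$ is a single section over $T$ presupposes one branch per fibre --- the same simplification the paper makes in Lemma \ref{whitney2} --- otherwise one runs the factorization at each point of $n^{-1}(0)$ and sums the contributions, with no change to the argument.
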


To prove this theorem, we need the following lemma:

\begin{lemma}\label{whitney2}
Let $p:(X,0)\rightarrow (\mathbb{C},0)$ be a family of generically reduced curves and suppose that there is a good representative $p:X\rightarrow T$ with a section $\sigma:T \rightarrow X$, such that both $\sigma(T)$ and $X_t \setminus \sigma(t)$ are smooth for $t \in T$. Assume that $(X,0) \subset (\mathbb{C}^N \times \mathbb{C},0)$ is topologically trivial and that $p$ and $\sigma$ are as in Remark \ref{remarkwhitney} (c). Let $(X^{j},0)$ be an irreducible component of $(X,0)$. Then after a suitable change of coordinates in the source of the normalization map $n_{j}: (\mathbb{C}^2,0)\rightarrow (X^{j},0)$ of $(X^{j},0)$ we have that:

\begin{flushleft}
\textit{(a) the image $\overline{t}$ of $t$ in $\mathcal{O}_{2}/ \langle n_{j}^{\ast}(x_{1}, \cdots, x_{N}) \rangle $ by the canonical projection $\mathcal{O}_2\twoheadrightarrow \mathcal{O}_{2}/ \langle n_{j}^{\ast}(x_{1}, \cdots, x_{N}) \rangle $ is a parameter, and}
\end{flushleft}

\begin{center}
\textit{$ \displaystyle { \sum_{j} \  l \left( \dfrac{\mathcal{O}_{2}}{ \langle n_{j}^{\ast}(x_{1}, \cdots, x_{N}),\overline{t}  \rangle} \right)} = m(X_{0}, \sigma(0))$ $\hspace{0.5cm} $ and $ \hspace{0.5cm} \displaystyle { \sum_{j} \ e \left( (\overline{t}), \dfrac{\mathcal{O}_{2}}{ \langle n_{j}^{\ast}(x_{1}, \cdots, x_{N}) \rangle } \right)}=m(X_{t},\sigma(t))$ for $t \neq 0$.}
\end{center}

\begin{flushleft}
\textit{(b) $X$ is Whitney equisingular if and only if $\mathcal{O}_{2}/ \langle n_{j}^{\ast}(x_{1}, \cdots, x_{N}) \rangle $ is Cohen-Macaulay for all $j=1, \cdots, r$, where $r$ is the number of irreducible components of $X$.}
\end{flushleft}

\end{lemma}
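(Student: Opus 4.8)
The plan is to identify $R_{j}:=\mathcal{O}_{2}/\langle n_{j}^{\ast}(x_{1},\dots,x_{N})\rangle$ with the local ring of the scheme $M_{j}:=n_{j}^{-1}(\sigma(T))$, to read both identities of (a) off the finite map $M_{j}\to T$ (over $t=0$ and over small $t\neq 0$), and to deduce (b) from the inequality $e\le l$ for parameter ideals. Since $p$ and $\sigma$ are as in Remark \ref{remarkwhitney}(c), one has $\{x_{1}=\cdots=x_{N}=0\}\cap X=\sigma(T)$, so $\langle n_{j}^{\ast}(x_{1},\dots,x_{N})\rangle$ is exactly the ideal of $M_{j}$ in $\mathcal{O}_{2}$, and $M_{j}\to T$ is finite (a restriction of the finite map $n_{j}$), so $R_{j}$ is a module-finite $\mathbb{C}\{t\}$-algebra. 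I would first note that topological triviality forces $\sigma(T)\subset X^{j}$ for each $j$: otherwise $X^{j}\cap\sigma(T)=\{0\}$, and since $X^{i}\cap X^{j}\in\{\sigma(T),\{0\}\}$ for $i\neq j$ (as in the proof of Lemma \ref{lemmaaux2}), $X^{j}$ would meet every other component only at $0$, so $X\setminus\{0\}$ would be disconnected, contradicting (via Proposition \ref{propgreuel}) the connectedness of $X_{t}\cong X_{0}$. Hence each $R_{j}$ is one-dimensional.

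Since $X$ is topologically trivial, $\delta(X_{t},\sigma(t))$ is constant (Theorem \ref{toptriviality}), so by \cite[Kor.~3.2.1]{greuel} the normalization is a normalization in family; restricting to $X^{j}$ and using $\widehat{X^{j}}\cong(\mathbb{C}^{2},0)$, this means $n_{j}$ is (as in the proof of Theorem \ref{whitney1}) a deformation of the normalization of $(|X_{0}|,0)$: $n_{j}^{\ast}(t)$ has smooth central fibre $n_{j}^{-1}(X^{j}_{0})=\{n_{j}^{\ast}(t)=0\}$, and its restriction $n_{j}^{0}$ there is the normalization of $(|X^{j}_{0}|,0)$. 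Choosing source coordinates $(u,t)$ with $n_{j}^{\ast}(t)=t$ — this is the change of coordinates in (a) — gives $R_{j}/(\overline t)\cong\mathbb{C}\{u\}/\langle (n_{j}^{0})^{\ast}(x_{1},\dots,x_{N})\rangle$, which is Artinian since $\{x_{1}=\cdots=x_{N}=0=t\}\cap X=\{0\}$; hence $\overline t$ is a parameter of $R_{j}$ and $l(R_{j}/(\overline t))=\min_{i}\operatorname{ord}_{u}\big((n_{j}^{0})^{\ast}x_{i}\big)=m(|X^{j}_{0}|,0)=m(X^{j}_{0},0)$ by Lemma \ref{lemmaauxiliar}. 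Summing and using additivity of the multiplicity for the generically reduced curve $X_{0}$, whose branches are those of the $|X^{j}_{0}|$ (pairwise meeting only at $0$), gives the first formula.

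For the second formula, let $N_{j}\subset R_{j}$ be the $t$-power torsion submodule $H^{0}_{M(R_{j})}(R_{j})$; it has finite length, $R_{j}/N_{j}$ is $\mathbb{C}\{t\}$-torsion-free, hence free of some rank $\rho_{j}$, so $e((\overline t),R_{j})=e((\overline t),R_{j}/N_{j})=\rho_{j}$. On a good representative $\operatorname{Spec}(R_{j}/N_{j})\to T$ is finite and flat of rank $\rho_{j}$, and $N_{j}$ is concentrated over $t=0$, so for generic $t\neq 0$ the fibre of $M_{j}\to T$ over $t$ is $(n_{j}^{t})^{-1}(\sigma(t))$ and has length $\rho_{j}$, where $n_{j}^{t}$, the restriction of $n_{j}$ to $\{n_{j}^{\ast}(t)=t\}$, is the normalization of $|X^{j}_{t}|$ for generic $t$. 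As the scheme-theoretic preimage of the base point of a curve branch under its normalization has length the multiplicity of that branch, this length equals $m(|X^{j}_{t}|,\sigma(t))=m(X^{j}_{t},\sigma(t))$; summing gives $\sum_{j}e((\overline t),R_{j})=m(X_{t},\sigma(t))$ for $t\neq 0$, completing (a).

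For (b): as $X$ is topologically trivial, $\mu(X_{t},\sigma(t))$ is already constant (Theorem \ref{toptriviality}), so by Theorem \ref{whitney1} $X$ is Whitney equisingular if and only if $m(X_{t},\sigma(t))$ is constant, i.e.\ by (a) if and only if $\sum_{j}e((\overline t),R_{j})=\sum_{j}l(R_{j}/(\overline t))$. For each $j$, $\overline t$ is a parameter of the one-dimensional $R_{j}$, so $e((\overline t),R_{j})\le l(R_{j}/(\overline t))$, with equality if and only if $\overline t$ is a non-zerodivisor, i.e.\ if and only if $R_{j}$ is Cohen--Macaulay; since upper semicontinuity of the multiplicity already gives $m(X_{t},\sigma(t))\le m(X_{0},\sigma(0))$, the two sums are equal exactly when each term is, that is, exactly when every $R_{j}$ is Cohen--Macaulay. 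The main obstacle I anticipate is the second identity of (a): moving from the abstract multiplicity $e((\overline t),R_{j})$, which discards the finite-length torsion $N_{j}$, to the genuine generic multiplicity $m(X^{j}_{t},\sigma(t))$ — it is precisely this torsion, i.e.\ the failure of $R_{j}$ to be Cohen--Macaulay, that (b) detects — and this rests both on pinning down the form of $n_{j}$ via normalization in family and on the uniform (for small $t\neq 0$) identification of the fibres of $M_{j}\to T$ with sums of branch multiplicities.
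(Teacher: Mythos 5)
Your proof is correct, and its overall architecture matches the paper's: both exploit topological triviality to realize $n_j$ as a simultaneous normalization of the fibres, compute $l\bigl(R_j/(\overline t)\bigr)$ (writing $R_j=\mathcal{O}_2/\langle n_j^{\ast}(x_1,\dots,x_N)\rangle$) by restricting to $t=0$ and invoking Lemma \ref{lemmaauxiliar}, and settle (b) through the numerical criterion ``$e=l$ for a parameter ideal if and only if Cohen--Macaulay'' combined with Theorems \ref{toptriviality} and \ref{whitney1}. Where you genuinely diverge is the second identity of (a): the paper computes $e\bigl((\overline t),R_j\bigr)$ by the associativity formula \cite[Th.~14.7]{matsumura}, reducing it to $1\cdot l\bigl((R_j)_{(\overline u)}\bigr)$ and reading off both this localized length and the generic multiplicity $m(X_t^j,\sigma(t))$ from the explicitly written deformation of the parametrization; you instead pass to the torsion-free quotient $R_j/N_j$, finite and free over $\mathbb{C}\{t\}$, identify $e\bigl((\overline t),R_j\bigr)$ with its rank, and equate that rank with the generic fibre length of $M_j\to T$, i.e.\ with the sum of branch multiplicities of $X^j_t$ at $\sigma(t)$. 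This conservation-of-length argument avoids the explicit coordinate form of $n_j$ and makes visible exactly what the torsion $N_j$ (the failure of Cohen--Macaulayness) measures, at the cost of having to set up the finite flat structure on a representative of $M_j\to T$; the paper's route is more computational but self-contained once the prepared parametrization is written down. Two further points in your favour: you make explicit, via Proposition \ref{propgreuel}, that topological triviality forces $\sigma(T)\subset X^j$ for every component, which is needed for $\dim R_j=1$ and is left implicit in the paper's reduction to the irreducible case; and in (b) you argue directly on $X$ (Whitney equisingular iff $m(X_t,\sigma(t))$ constant iff termwise $e=l$), whereas the paper chains through the componentwise statements using Remark \ref{remarkwhitney}(d). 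One small wording caveat: your ``for generic $t\neq 0$'' should be ``for all sufficiently small $t\neq 0$'', which is what your flatness argument actually delivers and what the statement requires.
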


\begin{proof}(a) Suppose first that $X$ is irreducible. Since $X$ is topologically trivial, we have that $X$ admits a weak simultaneous resolution (see \cite[Def. $2.1$ and Th. $9.3$]{greuel3}) $n : D \times T\rightarrow X \subset B \times T$ which is also the normalization of $(X,0)$. Also, $n$ can be given as a deformation of a parametrization of $(|X_0|,0)$. That is, $n$ can be defined by

\begin{center}
$n(u,t)=(\alpha_1(u)u^{s_1}+h_1(u,t),\cdots,\alpha_N(u)u^{s_N}+h_N(u,t),t)$ 
\end{center}

\noindent where $h_j(u,t) \in \mathbb{C}\lbrace u,t \rbrace$, $h_j(u,0)=0$, $\alpha_j(u) \in \mathbb{C}\lbrace u \rbrace$, $\alpha_j(0)\neq 0$ and $s_1\leq \cdots \leq s_N$. Note that $n(u,0)=(\alpha_1(u)u^{s_1},\cdots,\alpha_N(u)u^{s_N},0)$ is the normalization map of $(|X_0|,0)$, hence by Lemma \ref{lemmaauxiliar} we have that $m(X_0,0)=m(|X_0|,0)=s_1$.

Since $n$ is a weak simultaneous resolution the reduced spaces $|n^{-1}(\sigma(T))|$ and $|n^{-1}(0)| \times T$ are isomorphic. After a change of coordinates we can suppose that $n^{-1}(\sigma(T))= 0 \times T$ (as sets). Hence, $\sqrt{ \langle n^{\ast}(x_{1}, \cdots, x_{N}) \rangle } \mathcal{O}_2 =  \langle u \rangle \mathcal{O}_2$, where $\mathcal{O}_2 \simeq \mathbb{C}\lbrace u,t \rbrace$. This implies that $\overline{t}$ is a parameter in $\mathcal{O}_{2}/ \langle n^{\ast}(x_{1}, \cdots, x_{N}) \rangle $. Therefore

\begin{center}
$l \left( \dfrac{\mathcal{O}_{2}}{ \langle n^{\ast}(x_{1}, \cdots, x_{N}),\overline{t} \rangle } \right)=l \left( \dfrac{\mathbb{C}\lbrace u \rbrace}{ \langle \alpha_{1}(u) \cdot u^{s_{1}}, \cdots, \alpha_{N}(u) \cdot u^{s_{N}} \rangle } \right)=l \left( \dfrac{\mathbb{C}\lbrace u \rbrace}{ \langle u^{s_{1}} \rangle } \right)=s_{1}=m(X_{0},\sigma(0))$.
\end{center}

Since $X$ is topologically trivial, the ideal $(\overline{u})$ is the only minimal prime ideal of the ring $\mathcal{O}_2/ \langle n^{\ast}(x_{1}, \cdots, x_{N}) \rangle $, where $\overline{u}$ denotes the image of $u$ in $\mathcal{O}_{2}/ \langle n^{\ast}(x_{1}, \cdots, x_{N}) \rangle $ by the canonical projection $\mathcal{O}_2\twoheadrightarrow \mathcal{O}_{2}/ \langle n^{\ast} ( x_{1}, \cdots, x_{N}) \rangle $. Then, by (\cite{matsumura}, Th. 14.7) we have that

\begin{center}
$ e \left( \langle \overline{t} \rangle , \dfrac{\mathcal{O}_{2}}{ \langle n^{\ast}(x_{1}, \cdots, x_{N}) \rangle } \right)=e\left( \langle \overline{\overline{t}} \rangle ,\dfrac{\mathcal{O}_{2}}{ \langle n^{\ast}(x_{1}, \cdots, x_{N}) ,\overline{u} \rangle }\right)\cdot l \left( \left(\dfrac{\mathcal{O}_{2}}{ \langle  n^{\ast}(x_{1}, \cdots, x_{N}) \rangle }\right)\langle \overline{u} \rangle \right)$.
\end{center}

\noindent where $\overline{\overline{t}}$ is the image of $\overline{t}$ by the canonical projection $\mathcal{O}_{2}/ \langle n^{\ast}(x_{1}, \cdots, x_{N}) \rangle \twoheadrightarrow  \dfrac{\mathcal{O}_{2}/ \langle n^{\ast}(x_{1}, \cdots, x_{N}) \rangle}{\langle \overline{u} \rangle}$. Note that

\begin{center}
$ e\left( \langle \overline{\overline{t}} \rangle ,\dfrac{\mathcal{O}_{2}}{ \langle n^{\ast}(x_{1}, \cdots, x_{N}),\overline{u}  \rangle}\right)=e( \langle  t \rangle ,\mathbb{C}\lbrace t \rbrace)=1$
\end{center}

Let $\Gamma$ be the set of all $i$ such that $h_i(u,t)$ is not identically the zero function. Now, for each $i=1,\cdots, N$ such that $i \in \Gamma$, we can write $h_i(u,t)=u^{\beta_i}\tilde{h}(u,t)$ where $\tilde{h}(0,t)\neq 0$. Set $k:= \ min \lbrace s_{1}, \cdots, s_{N}, \beta_{i} \ | \ i \in \Gamma  \rbrace$, hence we have that $m(X_{t}, \sigma(t))=k$, for $t \neq 0$. Furthermore,

\begin{center}
$l \left( \left(\dfrac{\mathcal{O}_{2}}{ \langle n^{\ast}(x_{1}, \cdots, x_{N}) \rangle }\right) \langle \overline{u} \rangle \right)=l \left( \left(\dfrac{\mathbb{C}\lbrace u,t \rbrace}{ \langle u^{k} \rangle}\right) \langle u \rangle \right)=k=m(X_{t}, \sigma(t))$.
\end{center}

Since the multiplicity of $X_{t}$ at $\sigma(t)$ is the sum of the multiplicities of $X_{t}^{1}, \cdots, X_{t}^{r} $ at $\sigma(t)$, the result for the general case where $X=X^1\cup \cdots \cup X^r$ also holds, where $X_t^i= X_t\cap X^i$.\\

\noindent (b) By item (a) we have that the ideal $ \langle \overline{t} \rangle $ is a parameter ideal in $\mathcal{O}_{2}/ \langle n_{j}^{\ast}(x_{1}, \cdots x_{N}) \rangle $. Since $X$ is topologically trivial, the following statements are equivalent, by \cite[Th. 17.11]{matsumura} together with Theorem \ref{whitney1} and Theorem \ref{toptriviality}:\\

\noindent 1.  $\mathcal{O}_{2}/ \langle n_{j}^{\ast}(x_{1}, \cdots x_{N}) \rangle $ is Cohen-Macaulay for all $j$.\\
2. $ l \left( \dfrac{\mathcal{O}_{2}}{ \langle (n_{j}^{\ast}(x_{1}, \cdots, x_{N}),\overline{t}) \rangle } \right) \ = \ e \left( \langle \overline{t} \rangle , \dfrac{\mathcal{O}_{2}}{ \langle n_{j}^{\ast}(x_{1}, \cdots, x_{N}) \rangle } \right)$ for all $j$.\\
3.  $m(X_{0}^j,\sigma(0))=m(X_{t}^j,\sigma(t))$ for all $j$.\\
4. $X^{j}$ is Whitney equisingular for all $j$.\\ 
5.  $X$ is Whitney equisingular.\end{proof}

\begin{proof}(of theorem \ref{whitney3}) We will suppose that $p:X\rightarrow T$ and $\sigma:T\rightarrow X$ are as in Remark \ref{remarkwhitney} (c).\\

(1) $\Rightarrow$ (2) Let $X^{j}$ be an irreducible component of $X$. It is sufficient to prove that $X^{j}$ admits a strong simultaneous resolution for all $j$, thus we can assume that $X$ is irreducible. Since $X$ is Whitney equisingular, in particular, $X$ admits a weak simultaneous resolution (see \cite[Th. $9.3$]{greuel3})

\begin{center}
$n:(\mathbb{C},0) \times T \rightarrow X \subset (\mathbb{C}^N \times T,0)$
\end{center}

\noindent defined by $n(u,t):=(z_{1}(u,t), \cdots, z_{N}(u,t),t)$, where $z_{i}(u,t) \in \mathbb{C} \lbrace u,t \rbrace$. Let $(x_{1}, \cdots, x_{N},t)$ be a system of coordinates of $\mathbb{C}^N \times \mathbb{C}$. The inverse image of $T$ by $n$ is defined in a neighborhood of $0$ by the ideal $ \langle z_{1}(u,t), \cdots, z_{N}(u,t) \rangle  \mathcal{O}_2 = \langle n^{\ast}(x_{1}, \cdots, x_{N}) \rangle \mathcal{O}_2$.

Since $X$ is topologically trivial we have that $\sqrt{ \langle n^{\ast}(x_{1}, \cdots, x_{N}) \rangle }\mathcal{O}_2= \langle u \rangle \mathcal{O}_2$, thus $ \langle n^{\ast}(x_{1}, \cdots, x_{N})\mathcal{O}_2 \rangle $ is a $ \langle u \rangle $-primary ideal. Since $X$ is also Whitney equisingular, by Lemma \ref{whitney2} (b) we have that $\mathcal{O}_{2}/ \langle n^{\ast}(x_{1}, \cdots, x_{N} \rangle$ is Cohen-Macaulay, so $ \langle n^{\ast}(x_{1}, \cdots, x_{N}) \rangle $ does not have embedded associated primes. Then it follows that $ \langle n^{\ast}(x_{1}, \cdots, x_{N}) \rangle \mathcal{O}_2= \langle u^{m} \rangle\mathcal{O}_2$, where $m$ is the multiplicity of $(X_{0},0)$. The statement follows by the isomorphism of $\mathbb{C}$-algebras 

\begin{center}
$\mathcal{O}_{n^{-1}(0)\times T}\simeq \dfrac{\mathbb{C}\lbrace u,t \rbrace}{ \langle n^{\ast}(x_{1}, \cdots, x_{N},t) \rangle \mathbb{C}\lbrace u,t \rbrace}\otimes_{\mathbb{C}}\mathbb{C}\lbrace w \rbrace \simeq  \dfrac{\mathbb{C}\lbrace u,w \rbrace}{ \langle u^m \rangle \mathbb{C}\lbrace u,w \rbrace} \simeq \dfrac{\mathbb{C}\lbrace u,t \rbrace}{ \langle u^m \rangle \mathbb{C}\lbrace u,t \rbrace}\simeq \mathcal{O}_{n^{-1}(T)}$
\end{center}

\noindent (2) $\Rightarrow$ (1) Again we can assume that $X$ is irreducible. Suppose that $X$ admits a strong simultaneous resolution, then $X$ is topologically trivial and $n^{-1}(T)$ is isomorphic to $n^{-1}(0) \times T$, this implies that

\begin{center}
$\mathcal{O}_{n^{-1}(0)\times T}\simeq \dfrac{\mathbb{C}\lbrace u,t \rbrace}{ \langle n^{\ast}(x_{1}, \cdots, x_{N}),t \rangle\mathbb{C}\lbrace u,t \rbrace}\otimes_{\mathbb{C}}\mathbb{C}\lbrace w \rbrace\simeq \dfrac{\mathbb{C}\lbrace u,t \rbrace}{  \langle n^{\ast}(x_{1}, \cdots, x_{N}) \rangle \mathbb{C}\lbrace u,t \rbrace}\simeq \mathcal{O}_{n^{-1}(T)}$
\end{center}

\noindent Since $\mathcal{O}_{n^{-1}(0)\times T}$ is Cohen-Macaulay, we have that $\mathcal{O}_{2}/ \langle n^{\ast}(x_{1}, \cdots x_{N}) \rangle $ is also Cohen-Macaulay, thus the result follows by Lemma \ref{whitney2}.\end{proof}

\begin{example}\label{exemplo3.8} (a) Let us consider the Example \rm \ref{exemplo2.1}. \textit{Note that the local ring}

\begin{center}
\textit{$\dfrac{\mathbb{C}\lbrace u,t \rbrace}{ \langle n^{\ast}(x,y,z) \rangle } \simeq  \dfrac{\mathbb{C}\lbrace u,t \rbrace}{  \langle u^3,u^4,tu \rangle }\simeq \dfrac{\mathbb{C}\lbrace u,t \rbrace}{ \langle u^3,tu \rangle }$}
\end{center}

\noindent \textit{is not Cohen-Macaulay. Since $(X,0)$ is topologically trivial, by Lemma} \rm\ref{whitney2}\textit{(b) $X$ is not Whitney equisingular and, by Theorem} \rm\ref{whitney3}, \textit{$X$ does not admit a strong simultaneous resolution}.\\

\noindent \textit{(b) Let us consider the Example} \rm \ref{exemplo3.9}\textit{(b). Note that the local ring}

\begin{center}
\textit{$\dfrac{\mathbb{C}\lbrace u,t \rbrace}{ \langle n^{\ast}(x,y,z)  \rangle} \simeq  \dfrac{\mathbb{C}\lbrace u,t \rbrace}{ \langle u^3,u^4,tu^5 \rangle }\simeq \dfrac{\mathbb{C}\lbrace u,t \rbrace}{ \langle u^3 \rangle }$}
\end{center}

\noindent \textit{is Cohen-Macaulay. Since $(X,0)$ is topologically trivial, again by Lemma} \rm\ref{whitney2}\textit{(b) $X$ is  Whitney equisingular and, by Theorem} \rm\ref{whitney3}, \textit{$X$ admits a strong simultaneous resolution}.\\

\textit{Item (a) can be generalized in the following way (see} \rm\cite[\textit{Prop. 3.51}]{ref22}\textit{):}\\

\noindent \textit{(c) Let $s$ be an integer which is not a multiple of $3$. Let $(X,0)$ be the germ of surface parametrized by the map} 

$$\begin{array}{rcl}
n:(\mathbb{C}^2,0) & \rightarrow & (X,0) \subset (\mathbb{C}^4,0)\\
(u,t) & \mapsto & (u^3, u^s, tu, t)\\
\end{array}
$$

The restriction to $(X,0)$ of the canonical projection $p$ to the last factor of $\mathbb{C}^3 \times \mathbb{C}$ makes the surface into a one parameter deformation of a curve $(X_0,0)$. Take a good representative $p:X\rightarrow T$ and let $\sigma: T\rightarrow X$ be the section defined by $\sigma(t)=(0,0,0,t)$. Note that $X_t \setminus \sigma(t)$ is smooth for all $t\in T$.\\

\noindent \textit{(c.1) If $s=3k+1$, then $(X,0)$ is a reduced surface defined by the ideal}
 
$$I_{(X,0)}= \langle x^kz-ty, \ y^3-x^{3k+1}, \ yz^2-t^2x^{k+1}, \ z^3-t^3x, \ y^2z-tx^{2k+1} \rangle \mathcal{O}_4 .$$

\textit{The curve $(X_0,0)$ is defined by the ideal:}
 
$$I_{(X_0,0)}= \langle x^kz,y^3-x^{3k+1},yz^2,z^3,y^2z\rangle  \mathcal{O}_3 .$$

\textit{By Remark} \rm\ref{remark3.2} \textit{we have that} 

$$\epsilon(X_{0},0)= 
{\rm dim}_{{\mathbb C}} \left( \frac{ \langle  z,y^3-x^{3k+1} \rangle }{ \langle x^kz,y^3-x^{3k+1},y^2z,yz^2,z^3 \rangle } \right) \mathcal{O}_3 = 3k,$$

\noindent \textit{since $x^az,x^ayz$ and $x^az^2$, where $a=0,1,\cdots,k-1$, generate the nilradical of $\mathcal{O}_{(X_0,0)}$ as a $\mathbb{C}$-vector space.}

\textit{We have that, $\mu(|X_{0}|,0)=6k$, hence $\mu(X_{0},0)=0$. As the fiber $(X_t,\sigma(t))$ is smooth (and reduced) for $t\neq 0$ we have that $\mu(X_t,\sigma(t))$ is constant for all $t$. Since $X$ is irreducible, by Lemma} \rm\ref{lemmaaux2} \textit{(a) $(X_t,\sigma(t))$ is connected for all $t$. Hence, by Theorem} \rm\ref{toptriviality}, \textit{we have that $X$ is topologically trivial. However, $m(X_0,0)=3$ and $m(X_t,\sigma(t))=1$ for $t\neq 0$, hence $X$ is not Whitney equisingular, by Theorem} \rm\ref{whitney1}.\\

\noindent \textit{(c.2) If $s=3k+2$, then $(X,0)$ is a reduced surface defined by the ideal}
 
$$I_{(X,0)}= \langle yz-tx^{k+1}, \ z^3-t^3x, \ y^3-x^{3k+2}, \ x^{2k+1}z-ty^2, \ x^kz^2-t^2y \rangle \mathcal{O}_4 .$$

\textit{The curve $(X_0,0)$ defined by the ideal:} 

$$I_{(X_0,0)}= \langle yz,z^3,y^3-x^{3k+2},x^{2k+1}z,x^kz^2 \rangle \mathcal{O}_3 .$$

\textit{Again by Remark} \rm\ref{remark3.2} \textit{we have that} 

$$\epsilon(X_{0},0)= 
{\rm dim}_{{\mathbb C}} \left( \frac{ \langle z,y^3-x^{3k+2} \rangle }{ \langle yz,z^3,y^3-x^{3k+2},x^{2k+1}z,x^kz^2 \rangle } \right) \mathcal{O}_3 = 3k+1,$$

\noindent \textit{since $x^az$ and $x^bz^2$, where $a=0,1,\cdots,2k$ and $b=0,1,\cdots,k-1$, generate the nilradical of $\mathcal{O}_{(X_0,0)}$ as a $\mathbb{C}$-vector space.}

\textit{We have that, $\mu(|X_{0}|,0)=6k+2$, hence $\mu(X_{0},0)=0$. Again the fiber $(X_t,\sigma(t))$ is smooth (and reduced) for $t\neq 0$, so we have that $\mu(X_t,\sigma(t))$ is constant for all $t$. We have also that $(X_t,\sigma(t))$ is connected for all $t$. Hence, by Theorem} \rm\ref{toptriviality} \textit{we have that $X$ is topologically trivial. However, $m(X_0,0)=3$ and $m(X_t,\sigma(t))=1$ for $t\neq 0$, so $X$ is not Whitney equisingular, by Theorem} \rm\ref{whitney1}.

\end{example}

\begin{remark} We note that the surface $(X,0)$ of Example \rm\ref{exemplo3.8} \textit{(c) can also be seen as a toric surface parameterized by a monomial finitely determined map germ from $\mathbb{C}^2$ to $\mathbb{C}^4$ (see} \rm\cite[\textit{Theorem 1}]{elenice}\textit{). Thus, using the results in} \rm\cite{elenice}\textit{, one can produce other examples of this kind. For the computations in the examples we have made use of the software Singular} \rm\cite{ref21}.
\end{remark}

\begin{flushleft}
\textit{Acknowlegments:} The authors warmly thank the referee for very careful reading and valuable comments and suggestions.  We would like to thank M.A.S Ruas for many helpful conversations and G-M. Greuel for his suggestions and comments on this work. The first author would like to thank CONACyT for the financial support by Fordecyt 265667. Both authors are grateful to UNAM/DGAPA for support by PAPIIT IN $113817$, and to CONACyT grant 282937.
\end{flushleft}

\end{document}